\tikzset{black/.style={circle,fill=black,inner sep=3pt,outer sep=3pt},
         white/.style={circle,fill=white,draw=black,inner sep=3pt,outer sep=3pt},
}
\newcommand{\opname}[1]{\operatorname{\mathsf{#1}}}
\renewcommand{\mod}{\opname{mod}\nolimits}
\newcommand{\brick}{\opname{brick}}
\newcommand{\sbrick}{\opname{sbrick}}
\newcommand{\mbrick}{\opname{mbrick}}
\numberwithin{equation}{section}
\newtheorem{theorem}{Theorem}[section]
\newtheorem{corollary}[theorem]{Corollary}
\newtheorem{proposition}[theorem]{Proposition}
\newtheorem{definition-theorem}[theorem]{Definition-Theorem}
\newtheorem{definition-proposition}[theorem]{Definition-Proposition}
\theoremstyle{definition}
\newtheorem{definition}[theorem]{Definition}
\newtheorem{remark}[theorem]{Remark}
\newtheorem{example}[theorem]{Example}
\newcommand{\Hom}{\operatorname{Hom}\nolimits}
\newcommand{\End}{\operatorname{End}\nolimits}
\newcommand{\RHom}{\mathbf{R}\strut\kern-.2em\operatorname{Hom}\nolimits}
\DeclareMathOperator{\moduleCategory}{\mathsf{mod}} \renewcommand{\mod}{\moduleCategory}
\begin{document}

\title{Left Schur subcategories in recollements}

\author[Yingying Zhang]{Yingying Zhang*}
\address{Yingying Zhang,
Department of Mathematics, Huzhou University, Huzhou 313000, Zhejiang Province, P.R.China}
\email{yyzhang@zjhu.edu.cn}

\author[Dajun Song]{Dajun Song}
\address{Dajun Song,
Department of Mathematics, Huzhou University, Huzhou 313000, Zhejiang Province, P.R.China}
\email{208063861@qq.com}
\thanks{MSC2020: 16G10, 18A40, 18E10}
\thanks{Keywords: Monobrick, Left Schur subcategory, recollement}
\thanks{*Corresponding author}

\begin{abstract}
Recently left Schur subcategories in a length abelian category were introduced by Enomoto, which unify torsion-free classes and wide subcategories. In this paper, we show the construction of left Schur subcategories in the recollements of length abelian categories. Moreover, we show the construction restricts to wide subcategories and torsion-free classes. As an application, we give an explicit construction of cofinally closed monobricks in recollements.
\end{abstract}

\maketitle

\section{Introduction}

In representation theory of finite-dimensional algebras, (semi)bricks are a generalization of (semi)simple modules, and they have long been studied \cite{Rin}. Adachi,Iyama and Reiten introduced support $\tau$-tilting modules and proved that they bijectively correspond to functorially finite torsion classed \cite{AIR}. Then Demonet, Iyama and Jasso gave the relation between bricks and $\tau$-rigid modules \cite{DIJ}. Asai investigated semibricks from the point of view of $\tau$-tilting theory \cite{As}. Recently, as a generalization of semibricks, Enomoto introduced monobricks in a length abelian category \cite{E}. He showed that monobricks are in bijection with left Schur subcategories, which unify torsion-free classes and wide subcategories. This bijection will be central in our approach.

A recollement of abelian categories is a short exact sequence of abelian categories where both the inclusion and the quotient functors admit left and right adjoints. They first appeared in the construction of the category of perverse sheaves on a singular space by Beilinson, Bernstein and Deligne \cite{BBD}, arising from recollements of triangulated categories. Gluing techniques with respect to a recollement, due to Beilinson, Bernstein and Deligne have been intensively studied in \cite{BBD} for t-structures and simple objects. Torsion pairs in recollements of abelian categories have been investigated by Ma and Huang in \cite{MH}. In \cite{Zh, Z}, the first author studied gluing techniques for semibricks and support $\tau$-tilting modules. Recently, Yang and Qin explored monobricks and ICE-closed subcategories over recollements \cite{YQ}. In this paper, we mainly consider left Schur subcategories in recollements of length abelian cateogries. Our main result is as follows.
\begin{theorem}{\rm(Theorem 3.2)}
Let $R(\mathcal {Y}, \mathcal {X}, \mathcal {Z})$ be a recollement of length abelian categories
$$\xymatrix@!C=2pc{
\mathcal {Y}\; \ar@{>->}[rr]|{i_{*}} && \mathcal {X} \ar@<-4.0mm>@{->>}[ll]_{i^{*}} \ar@{->>}[rr]|{j^{*}} \ar@{->>}@<4.0mm>[ll]^{i^{!}}&& \mathcal {Z} \ar@{>->}@<-4.0mm>[ll]_{j_{!}} \ar@{>->}@<4.0mm>[ll]^{j_{*}}
}.$$ Assume that $\mathcal {E}_{\mathcal {Y}}$ and $\mathcal {E}_{\mathcal {Z}}$ are respectively subcategories of $\mathcal {Y}$ and $\mathcal {Z}$, $\mathcal {E}_{\mathcal {X}}=\{X\in \mathcal {X}|j^{*}X\in\mathcal {E}_{\mathcal {Z}}, i^{!}X\in\mathcal {E}_{\mathcal {Y}}\}$ is a subcategory of $\mathcal {X}$. If $i^{!}$ is exact, then the following are equivalent:
\begin{itemize}
\item[\rm(1)] $\mathcal {E}_{\mathcal {Y}}$ and $\mathcal {E}_{\mathcal {Z}}$ are left Schur.
\item[\rm(2)] $\mathcal {E}_{\mathcal {X}}$ is left Schur.
\end{itemize} 
\end{theorem}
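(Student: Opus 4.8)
The plan is to glue the left Schur condition across the recollement by means of the two functors in it that are exact. Recall the standard features of a recollement of abelian categories: $j^{*}$ is exact (being at once a left and a right adjoint), $i_{*}$ is exact and fully faithful, $j_{*}$ and $i^{!}$ are left exact, one has $i^{!}i_{*}\iso\mathrm{id}$, $j^{*}j_{*}\iso\mathrm{id}$, and the vanishing relations $j^{*}i_{*}=0$ and $i^{!}j_{*}=0$ hold; by hypothesis $i^{!}$ is moreover exact. A left Schur subcategory is in particular closed under extensions and under kernels of its internal morphisms --- the closures common to torsion-free classes and wide subcategories --- and an exact functor preserves kernels, cokernels and short exact sequences; the guiding principle is therefore that $j^{*}$ and $i^{!}$ transport the left Schur property in both directions, the one delicate point being where the full (not merely left) exactness of $i^{!}$ is needed.

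For $(1)\Rightarrow(2)$, take a morphism $f\colon X_{1}\to X_{2}$ with $X_{1},X_{2}\in\mathcal{E}_{\mathcal{X}}$. Since $j^{*}$ is exact, $j^{*}\Kernel f=\Kernel(j^{*}f)$, which lies in $\mathcal{E}_{\mathcal{Z}}$ because $j^{*}f$ is a morphism inside the left Schur subcategory $\mathcal{E}_{\mathcal{Z}}$; as $i^{!}$ is left exact the same holds for $i^{!}\Kernel f=\Kernel(i^{!}f)\in\mathcal{E}_{\mathcal{Y}}$, so $\Kernel f\in\mathcal{E}_{\mathcal{X}}$. For closure under extensions, a short exact sequence $0\to X_{1}\to X\to X_{2}\to 0$ with ends in $\mathcal{E}_{\mathcal{X}}$ is sent by the exact $j^{*}$ to a short exact sequence with ends in $\mathcal{E}_{\mathcal{Z}}$, giving $j^{*}X\in\mathcal{E}_{\mathcal{Z}}$; here is the decisive point: applying $i^{!}$ produces $0\to i^{!}X_{1}\to i^{!}X\to i^{!}X_{2}\to 0$, which is short exact precisely because $i^{!}$ is assumed exact, whence $i^{!}X\in\mathcal{E}_{\mathcal{Y}}$ and $X\in\mathcal{E}_{\mathcal{X}}$. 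With only left exactness one would obtain $i^{!}X$ as an extension of a subobject of $i^{!}X_{2}$ by $i^{!}X_{1}$, which does not suffice as a general left Schur subcategory is not closed under subobjects.

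For $(2)\Rightarrow(1)$, I first record that $i_{*}$ and $j_{*}$ send the outer subcategories into $\mathcal{E}_{\mathcal{X}}$: for $Y\in\mathcal{E}_{\mathcal{Y}}$ one has $j^{*}i_{*}Y=0\in\mathcal{E}_{\mathcal{Z}}$ and $i^{!}i_{*}Y\iso Y\in\mathcal{E}_{\mathcal{Y}}$, and for $Z\in\mathcal{E}_{\mathcal{Z}}$ one has $i^{!}j_{*}Z=0\in\mathcal{E}_{\mathcal{Y}}$ and $j^{*}j_{*}Z\iso Z\in\mathcal{E}_{\mathcal{Z}}$. A morphism $h$ in $\mathcal{E}_{\mathcal{Y}}$ is then lifted to $i_{*}h$, whose kernel lies in $\mathcal{E}_{\mathcal{X}}$ by the left Schur property there, and applying $i^{!}$ with $i^{!}i_{*}\iso\mathrm{id}$ returns $\Kernel h\in\mathcal{E}_{\mathcal{Y}}$; an extension in $\mathcal{E}_{\mathcal{Y}}$ is lifted by the exact $i_{*}$ to an extension in $\mathcal{E}_{\mathcal{X}}$, and $i^{!}i_{*}\iso\mathrm{id}$ brings it back. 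The kernel case for $\mathcal{E}_{\mathcal{Z}}$ is identical, lifting along $j_{*}$ and projecting along the exact $j^{*}$. For extensions in $\mathcal{E}_{\mathcal{Z}}$, given $0\to Z_{1}\to Z\to Z_{2}\to 0$ I would apply $j_{*}$ and set $W=\Image(j_{*}Z\to j_{*}Z_{2})$; left exactness of $j_{*}$ yields $0\to j_{*}Z_{1}\to j_{*}Z\to W\to 0$, while $W\hookrightarrow j_{*}Z_{2}$ gives $i^{!}W\hookrightarrow i^{!}j_{*}Z_{2}=0$ and $j^{*}W\iso Z_{2}\in\mathcal{E}_{\mathcal{Z}}$, so $W\in\mathcal{E}_{\mathcal{X}}$; extension-closure of $\mathcal{E}_{\mathcal{X}}$ forces $j_{*}Z\in\mathcal{E}_{\mathcal{X}}$ and hence $Z\iso j^{*}j_{*}Z\in\mathcal{E}_{\mathcal{Z}}$.

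The step I expect to be the real obstacle, and the unique place that forces the hypothesis, is the extension-closure of $\mathcal{E}_{\mathcal{X}}$ in $(1)\Rightarrow(2)$: every other verification goes through with the left exactness of $i^{!}$ and $j_{*}$ that holds automatically, but the exactness of the sequence $0\to i^{!}X_{1}\to i^{!}X\to i^{!}X_{2}\to 0$ needs $i^{!}$ to be right exact as well. One should also keep track of any clause of the definition phrased through filtrations of simple objects; these are handled uniformly by the same exact functors together with the canonical sequences $0\to i_{*}i^{!}X\to X\to j_{*}j^{*}X$ and $j_{!}j^{*}X\to X\to i_{*}i^{*}X\to 0$ of the recollement. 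Finally, the application to cofinally closed monobricks follows by specialising the equivalence to torsion-free classes, which are the left Schur subcategories that are in addition closed under subobjects; this extra closure is again transported by $j^{*}$ and $i^{!}$, so the construction of $\mathcal{E}_{\mathcal{X}}$ restricts, and the monobrick of $\mathcal{E}_{\mathcal{X}}$ is assembled from those of $\mathcal{E}_{\mathcal{Y}}$ and $\mathcal{E}_{\mathcal{Z}}$.
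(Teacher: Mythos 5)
Your argument has a genuine gap, and it sits exactly where you never look: the definition of ``left Schur.'' In the paper (Definition 2.3), $\mathcal{E}$ is left Schur when it is extension-closed \emph{and} every simple object of $\mathcal{E}$ is left Schurian for $\mathcal{E}$. Your proof instead verifies extension-closure and closure under kernels, resting on the opening assertion that a left Schur subcategory is ``in particular closed under \ldots kernels of its internal morphisms.'' That assertion is proved nowhere (it holds for torsion-free classes and wide subcategories, but a general left Schur subcategory is $\opname{Filt}$ of a monobrick, and kernel-closure of such a category is precisely the kind of statement that needs an argument), and it appears neither in this paper nor in its references. Much worse, your conclusions require the \emph{converse}: after checking that $\mathcal{E}_{\mathcal{X}}$ (resp.\ $\mathcal{E}_{\mathcal{Y}}$, $\mathcal{E}_{\mathcal{Z}}$) is extension- and kernel-closed, you declare it left Schur. ``Extension-closed $+$ kernel-closed $\Rightarrow$ left Schur'' is a substantial unproved claim: one must show that a simple object $M$ of $\mathcal{E}$ admits no nonzero non-injective $f\colon M\to Y$ with $Y\in\mathcal{E}$, and the natural argument (push the sequence $0\to\Kernel f\to M\to\Image f\to 0$ out along a well-chosen extension of $Y$ by $\Kernel f$ so that $\Image f$ splits off inside an object of $\mathcal{E}$) needs a lifting along $\Ext^1(Y,\Kernel f)\to\Ext^1(\Image f,\Kernel f)$, i.e.\ the vanishing of an $\Ext^2$ term; it is simply not available in a general length abelian category. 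You never identify the simple objects of $\mathcal{E}_{\mathcal{X}}$ and never check anything about them, so the defining condition is never addressed in either direction.

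This is exactly the difficulty the paper's proof is built to avoid, and the two approaches are genuinely different. The paper does not verify closure properties of $\mathcal{E}_{\mathcal{X}}$ at all: it uses the bijection of Theorem \ref{bijection} to write $\mathcal{E}_{\mathcal{Y}}=\opname{Filt}\mathcal{M}_{\mathcal{Y}}$ and $\mathcal{E}_{\mathcal{Z}}=\opname{Filt}\mathcal{M}_{\mathcal{Z}}$ with $\mathcal{M}_{\mathcal{Y}}=\opname{sim}\mathcal{E}_{\mathcal{Y}}$, $\mathcal{M}_{\mathcal{Z}}=\opname{sim}\mathcal{E}_{\mathcal{Z}}$, glues these to the monobrick $\mathcal{M}_{\mathcal{X}}=i_*(\mathcal{M}_{\mathcal{Y}})\sqcup j_*(\mathcal{M}_{\mathcal{Z}})$ of $\mathcal{X}$ (Proposition \ref{intermediate}, where exactness of $i^!$ gives $j_{!*}\cong j_*$), and then proves the identity $\mathcal{E}_{\mathcal{X}}=\opname{Filt}\mathcal{M}_{\mathcal{X}}$: one inclusion because the exact functors $i^!,j^*$ commute with $\opname{Filt}$, the other from the canonical exact sequence $0\to i_*i^!X\to X\to j_*j^*X\to 0$ (using exactness of $j_*$ from \cite{FZ}) together with Proposition \ref{filtration}. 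Left Schur-ness of $\mathcal{E}_{\mathcal{X}}$, simple objects and all, then comes for free from the bijection. In the direction $(2)\Rightarrow(1)$ the paper argues directly but verifies the actual definition: extension-closure via the exact $i_*$, and, for a simple $Y\in\mathcal{E}_{\mathcal{Y}}$, the fact that $i_*Y$ is simple in $\mathcal{E}_{\mathcal{X}}$, so that $i_*f$ is zero or injective and $f\cong i^!i_*f$ inherits this. Some of your intermediate steps are correct and even nice --- the extension-closure computations, and the device $W=\Image(j_*Z\to j_*Z_2)$, which avoids quoting exactness of $j_*$ --- but without either a proof that extension- plus kernel-closure characterizes left Schur subcategories, or a direct analysis of $\opname{sim}\mathcal{E}_{\mathcal{X}}$ such as the monobrick machinery provides, the proposal does not prove the theorem.
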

The rest of this paper is organized as follows.

In Section 2, we recall some preliminary results on monobricks, left Schur subcategories and recollements needed for the later section.

In Section 3, we mainly investigate the relation between left Schur subcategories in the middle categories of recollements and in the edge. Then we apply the result to triangular matrix algebras. Finally we give an example to illustrate the result.

\section{Preliminaries}\label{sec:preliminaries}
We refer to \cite{ARS, ASS} for basic terminology of representation theory of finite-dimensional
algebras.
\subsection{Monobricks and left Schur subcategories}

First we recall the definition of monobricks in \cite[Definition 2.1]{E}.
\begin{definition}
Let $\mathcal{A}$ be a length abelian category.
\begin{itemize}
\item[(1)] An object $S\in \mathcal{A}$ is called a \emph{brick} if  $\End_{A}(S)$ is a division $k$-algebra (i.e., the non-trivial endomorphisms are invertible). We write $\brick \mathcal{A}$ for the set of isoclasses of bricks in $\mathcal{A}$.
\item[(2)] A subset $\mathcal{S}\subset \brick \mathcal{A}$ is called a \emph{semibrick} if $\Hom_{\mathcal{A}}(S_{1}, S_{2})=0$ for any $S_{1}\neq S_{2}\in\mathcal{S}$. We write $\sbrick \mathcal{A}$ for the set of semibricks in $\mathcal{A}$.
\item[(3)] A subset $\mathcal{M}\subset \brick \mathcal{A}$ is called a \emph{monobrick} if every $f\in\Hom_{\mathcal{A}}(S_{1}, S_{2})$ is either zero or an injection for any $S_{1}, S_{2}\in\mathcal{M}$. We write $\mbrick \mathcal{A}$ for the set of monobricks in $\mathcal{A}$.
\item[(4)] A subset $\mathcal{M}\subset \brick \mathcal{A}$ is called \emph{cofinally closed} if it satisfy the condition $(CC)$: if there is an injection $N\hookrightarrow M$ for a brick $N\notin \mathcal{M}$ and $M\in \mathcal{M}$,  then there is a non-zero non-injection $N\rightarrow M'$ for some $M'\in \mathcal{M}$. We write $\mbrick_{c.c.} \mathcal{A}$ for the set of cofinally closed monobricks in $\mathcal{A}$.
\end{itemize}
\end{definition}

Note that the assumption that a monobrick $\mathcal{M}$ consists of bricks is automatically satisfied for a length abelian category. Every semibrick is obvious a monobrick. Next we recall the definition of left Schur subcategories. Before this, we need the following notations.
\begin{definition}\cite[Definitions 2.2 and 2.3]{E}
Let $\mathcal{A}$ be a length abelian category. 
\begin{itemize}
\item[(1)] A subcategory $\mathcal{E}$ is \emph{closed under extension} or \emph{extension-closed} in $\mathcal{A}$ if it satisfies the following conditions: for any short exact sequence $0\longrightarrow X\longrightarrow Y\longrightarrow Z\longrightarrow 0$ in $\mathcal{A}$, if $X\in \mathcal{E}$ and $Z\in \mathcal{E}$, then $Y\in \mathcal{E}$.
\item[(2)] Suppose that $\mathcal{E}$ is extension-closed in $\mathcal{A}$. Then a non-zero object $M\in \mathcal{E}$ is a \emph{simple object} in $\mathcal{E}$ if there is no exact sequence of the form $0\longrightarrow L\longrightarrow M\longrightarrow N\longrightarrow 0$ in $\mathcal{A}$ satisfying $L,M,N\in \mathcal{E}$ and $L,N\neq 0$. We denote by $\opname{sim} \mathcal{E}$ the set of isoclasses of simple objects in $\mathcal{E}$.
\item[(3)] Let $\mathcal{C}$ be a collection of objects in $\mathcal{A}$. Then a non-zero object $M\in \mathcal{A}$ is \emph{left Schurian} for $\mathcal{C}$ if every morphism $M\longrightarrow C$ with $C\in \mathcal{C}$ is either zero or an injection in $\mathcal{A}$.
\end{itemize}
\end{definition}
Clearly $\opname{sim} \mathcal{A}$ is nothing but the set of the usual simple objects in an abelian category $\mathcal{A}$. Thus $\opname{sim} \mathcal{E}$ is an analogue of simple objects inside $\mathcal{E}$. An extension-closed subcategory of $\mathcal{A}$ can naturally be regarded as an exact category, and the notion of simple objects is invariant under an equivalence of exact categories.

\begin{definition}\cite[Definition 2.5]{E}
Let $\mathcal{A}$ be a length abelian category. A subcategory $\mathcal{E}$ of $\mathcal{A}$ is \emph{left Schur} if it is extension-closed in $\mathcal{A}$ and every simple object in $\mathcal{E}$ is left Schurian for $\mathcal{E}$. We denote by $\opname{Schur_L} \mathcal{A}$ the set of left Schur subcategories of $\mathcal{A}$.
\end{definition}

Roughly speaking, left Schur subcategories are extension-closed subcategories of $\mathcal{A}$ such that the ``one-side Schur's lemma" holds. It is shown in \cite{E} that left Schur subcategories unify torsion-free classes and wide subcategories. Let us recall the definition of these subcateogries.
\begin{definition}
Let $\mathcal{A}$ be a length abelian category and $\mathcal {E}\subset \mathcal{A}$.
\begin{itemize}
\item[(1)] $\mathcal{E}$ is \emph{closed under kernels (resp. cokernels)} if for every morphism $f: X\rightarrow Y$ in $\mathcal{E}$, we have $\opname{Ker}f$ (resp. ${\opname{Coker}f}$) belongs to $\mathcal{E}$.
\item[(2)] $\mathcal{E}$ is a \emph{torsion-free class} in $\mathcal{A}$ if it is closed under extensions and subobjects in $\mathcal{A}$. We denote by $\opname{torf} \mathcal{A}$ the set of torsion-free classes in $\mathcal{A}$.
\item[(3)] $\mathcal{E}$ is a \emph{wide subcategories} in $\mathcal{A}$ if it is closed under extensions, kernels and cokernels. We denote by $\opname{wide} \mathcal{A}$ the set of wide subcategories of $\mathcal{A}$.
\end{itemize}
\end{definition}

The following result gives a bijection between $\opname{Schur_{L}}\mathcal{A}$ and $\mbrick \mathcal{A}$. Let $\mathcal {C}$ be a collection of objects in $\mathcal {A}$. $\opname{Filt}\mathcal {C}$ denotes the subcategory of $\mathcal {A}$ consisting of objects $X$ such that there is a $\mathcal {C}$-filtration $0=X_0\subset X_1\subset \cdots \subset X_n=X$ of subobjects of $X$ such that $X_i/X_{i-1}\in \mathcal {C}$ for each $i$.

\begin{theorem}{\rm(See \cite[Theorem 2.11]{E})}\label{bijection}
There is a bijection
\begin{center}
$\opname{Schur_{L}}\mathcal{A} \longleftrightarrow \mbrick \mathcal{A}$
\end{center}
given by $\opname{Schur_{L}}\mathcal{A} \ni \mathcal{E}  \mapsto \opname{sim}\mathcal{E} \in \mbrick \mathcal{A}$ and $\mbrick \mathcal{A} \ni \mathcal{M} \mapsto \opname{Filt}\mathcal{M} \in\opname{Schur_{L}}\mathcal{A}$, which restricts to the following bijections:
\begin{center}
$\opname{wide}\mathcal{A} \longleftrightarrow \sbrick \mathcal{A}$ and $\opname{torf}\mathcal{A} \longleftrightarrow \mbrick_{c.c.} \mathcal{A}$.
\end{center}
\end{theorem}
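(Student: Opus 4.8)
The plan is to show that the two assignments $\mathcal{E}\mapsto\opname{sim}\mathcal{E}$ and $\mathcal{M}\mapsto\opname{Filt}\mathcal{M}$ are well defined and mutually inverse, and then to verify that they carry the two distinguished classes of subcategories onto the corresponding classes of monobricks. For well-definedness in the first direction, let $\mathcal{E}$ be left Schur. Any $S\in\opname{sim}\mathcal{E}$ is left Schurian for $\mathcal{E}$ by hypothesis, so every morphism $S\to S'$ with $S'\in\opname{sim}\mathcal{E}$ is zero or a monomorphism; taking $S'=S$ and using that an injective endomorphism of a finite-length object is an isomorphism shows $\End_{\mathcal{A}}(S)$ is a division ring, so $S$ is a brick. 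Hence $\opname{sim}\mathcal{E}$ is a set of bricks in which every Hom is zero or injective, that is, $\opname{sim}\mathcal{E}\in\mbrick\mathcal{A}$.

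The technical heart is the reverse direction together with the identity $\opname{sim}(\opname{Filt}\mathcal{M})=\mathcal{M}$. Given a monobrick $\mathcal{M}$, the subcategory $\opname{Filt}\mathcal{M}$ is extension-closed by splicing $\mathcal{M}$-filtrations, so it remains to identify its simple objects. First I would show $\mathcal{M}\subseteq\opname{sim}(\opname{Filt}\mathcal{M})$: were some $M\in\mathcal{M}$ to admit a short exact sequence $0\to L\to M\to N\to 0$ inside $\opname{Filt}\mathcal{M}$ with $L,N\neq 0$, then composing $M\twoheadrightarrow N$ with the projection of $N$ onto the top layer $M''\in\mathcal{M}$ of some $\mathcal{M}$-filtration would yield a nonzero morphism $M\to M''$ with kernel containing $L\neq 0$, hence non-injective, contradicting the monobrick condition. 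Conversely a simple object of $\opname{Filt}\mathcal{M}$ cannot be properly filtered, so its one-step $\mathcal{M}$-filtration forces it into $\mathcal{M}$; thus $\opname{sim}(\opname{Filt}\mathcal{M})=\mathcal{M}$. Finally, inducting on filtration length and applying the monobrick condition to the top layer shows every $M\in\mathcal{M}$ is left Schurian for $\opname{Filt}\mathcal{M}$, so $\opname{Filt}\mathcal{M}$ is left Schur.

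For the remaining round-trip identity $\opname{Filt}(\opname{sim}\mathcal{E})=\mathcal{E}$, the inclusion $\supseteq$ holds since $\mathcal{E}\supseteq\opname{sim}\mathcal{E}$ is extension-closed, while $\subseteq$ follows by viewing $\mathcal{E}$ as a length exact category in which every object has a filtration with factors in $\opname{sim}\mathcal{E}$. This yields the bijection. To restrict it to $\opname{wide}\mathcal{A}\leftrightarrow\sbrick\mathcal{A}$: a wide subcategory is abelian, so Schur's lemma makes $\opname{sim}\mathcal{E}$ a semibrick, and conversely $\opname{Filt}\mathcal{S}$ is closed under kernels and cokernels by the classical argument of Ringel \cite{Rin}, hence wide. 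For $\opname{torf}\mathcal{A}\leftrightarrow\mbrick_{c.c.}\mathcal{A}$: if $\mathcal{E}$ is torsion-free then for a brick $N\hookrightarrow M$ with $M\in\opname{sim}\mathcal{E}$ and $N\notin\opname{sim}\mathcal{E}$, subobject-closure places $N$ in $\mathcal{E}$, and a simple quotient of $N$ in $\mathcal{E}$ supplies the nonzero non-injection required by $(CC)$; conversely, for a cofinally closed monobrick $\mathcal{M}$ one shows $\opname{Filt}\mathcal{M}$ is subobject-closed by induction on filtration length, with $(CC)$ furnishing exactly the map needed to keep each subobject inside the class.

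The main obstacle is the simples-identification step of the second paragraph: extracting from the monobrick condition that every $M\in\mathcal{M}$ is simple in $\opname{Filt}\mathcal{M}$, ruling out any further simples, and then running the filtration induction for the left-Schurian property are where the non-formal content lies; the restriction to torsion-free classes is similarly delicate, since it is precisely condition $(CC)$ that must power the subobject-closure induction.
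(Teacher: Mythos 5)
Note first that the paper contains no proof of this statement: it is recalled verbatim from Enomoto \cite[Theorem 2.11]{E}, so your attempt can only be compared with Enomoto's original argument. Most of your reconstruction is correct and follows his lines: the well-definedness of $\mathcal{E}\mapsto\opname{sim}\mathcal{E}$, extension-closedness of $\opname{Filt}\mathcal{M}$, the identity $\opname{sim}(\opname{Filt}\mathcal{M})=\mathcal{M}$ via composition with the top filtration layer, the induction on filtration length for the left Schurian property (compose with the projection onto the top layer; if zero, factor through the previous layer), the round trip $\opname{Filt}(\opname{sim}\mathcal{E})=\mathcal{E}$ (your $\subseteq$/$\supseteq$ labels are swapped, but both inclusions are argued correctly), the wide/semibrick restriction via Ringel \cite{Rin}, and the direction (torsion-free $\Rightarrow$ cofinally closed): a subobject $N\hookrightarrow M$ lands in $\mathcal{E}$ by subobject-closure, and its top simple quotient $N\twoheadrightarrow M'$ is non-injective unless $N\cong M'\in\mathcal{M}$.

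The genuine gap is the converse direction for torsion-free classes: that for a cofinally closed monobrick $\mathcal{M}$ the category $\opname{Filt}\mathcal{M}$ is closed under subobjects. Your proposal, ``induction on filtration length, with $(CC)$ furnishing exactly the map needed,'' does not describe a workable argument. The induction on the filtration length of $X$ does reduce (via $Y\cap X_{n-1}$) to its base case, but the base case --- an arbitrary subobject $Y$ of a single brick $M\in\mathcal{M}$ lies in $\opname{Filt}\mathcal{M}$ --- already carries the full difficulty: $Y$ need not be a brick, so $(CC)$ does not even apply to it, and when it does apply, $(CC)$ only supplies a nonzero non-injection $N\to M'$, which is a tool for \emph{contradictions} (it violates left-Schurian-ness of a simple object), not a device for \emph{constructing} an $\mathcal{M}$-filtration of $Y$. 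Enomoto's actual proof circumvents this by passing to the smallest torsion-free class containing $\opname{Filt}\mathcal{M}$, namely $\mathcal{F}=\opname{Filt}\bigl(\Sub(\opname{Filt}\mathcal{M})\bigr)$: one first checks that simple objects of a torsion-free class are bricks and left Schurian (kernel and image of any morphism out of $S$ stay in $\mathcal{F}$ by subobject-closure), then shows each $S\in\opname{sim}\mathcal{F}$ embeds into some $M\in\mathcal{M}$ (simplicity forces $S\in\Sub(\opname{Filt}\mathcal{M})$, then intersect $S$ with an $\mathcal{M}$-filtration of the ambient object), and finally $(CC)$ yields a contradiction unless $S\in\mathcal{M}$; hence $\opname{sim}\mathcal{F}=\mathcal{M}$ and $\mathcal{F}=\opname{Filt}(\opname{sim}\mathcal{F})=\opname{Filt}\mathcal{M}$ is torsion-free. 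Without some such closure device, your sketch of this step does not close.
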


\subsection{Recollements}

For the convenience, we recall the definition of recollements of abelian categories, see for instance \cite{BBD, FP}.
\begin{definition} Let $\mathcal {X}, \mathcal {Y}, \mathcal {Z} $ be abelian categories. Then a recollement of $\mathcal {X}$ relative to $\mathcal {Y}$ and $\mathcal {Z}$, is diagrammatically expressed by
$$\xymatrix@!C=2pc{
\mathcal {Y}\; \ar@{>->}[rr]|{i_{*}} && \mathcal {X} \ar@<-4.0mm>@{->>}[ll]_{i^{*}} \ar@{->>}[rr]|{j^{*}} \ar@{->>}@<4.0mm>[ll]^{i^{!}}&& \mathcal {Z} \ar@{>->}@<-4.0mm>[ll]_{j_{!}} \ar@{>->}@<4.0mm>[ll]^{j_{*}}
}$$
which satisfies the following three conditions:
\begin{itemize}
\item[(1)] ($i^{*}, i_{*}$), ($i_{*}, i^{!}$), ($j_{!}, j^{*}$) and ($j^{*}, j_{*}$) are adjoint pairs;
\item[(2)] $i_{*}, j_{!}$ and $j_{*}$ are fully faithful functors;
\item[(3)] ${\rm Im}i_{*}={\rm Ker}j^{*}$.
\end{itemize}
\end{definition}

\begin{remark}\label{six-functor}
\begin{itemize}
\item[(1)] From Definition 2.6(1), it follows that $i_{*}$ and $j^{*}$ are both right adjoint functors and left adjoint functors, therefore they are exact functors of abelian categories.
\item[(2)] By the definition of recollements, we have $i^{*}i_{*}\cong id, i^{!}i_{*}\cong id, j^{*}j_{!}\cong id$ and $j^{*}j_{*}\cong id$. Also $i^{*}j_{!}=0, i^{!}j_{*}=0$.
\item[(4)] Throughout this paper, we assume that $\mathcal {X}, \mathcal {Y}, \mathcal {Z} $ are all length abelian categories. For example, finitely generated module categories over finite-dimensional algebra are length abelian categories. We denote by $R(\mathcal {Y}, \mathcal {X}, \mathcal {Z})$ a recollement of $\mathcal {X}$ relative to $\mathcal {Y}$ and $\mathcal {Z}$ as above.
\end{itemize}
\end{remark}

Associated to a recollement there is a seventh functor $j_{!*}:= \opname{Im} (j_!\rightarrow j_*): \mathcal {Z}\rightarrow \mathcal {X}$ called the intermediate extension functor. This functor plays an important role in gluing simple objects (see \cite[Section 1.4]{BBD}). The following proposition summarizes results in \cite{BBD, FP, YQ, Zh}

\begin{proposition}\label{intermediate}
\begin{itemize}
\item[\rm(1)] If $i^!$ is exact, then $i^*j_*=0$ and $j_{!*}\cong j_*$.
\item[\rm(2)] Every simple object in $\mathcal {X}$ is either of the form $i_*S$ for some simple object in $\mathcal {Y}$ or of the form $j_{!*}S$ for some simple object in $\mathcal {Z}$.
\item[\rm(3)] If $\mathcal {S}_\mathcal {Y}\in \sbrick \mathcal {Y}$ and $\mathcal {S}_\mathcal {Z}\in \sbrick \mathcal {Z}$, then $i_*(\mathcal {S}_\mathcal {Y})\sqcup j_{!*}(\mathcal {S}_\mathcal {Z})\in \sbrick \mathcal {X}$.
\item[\rm(4)] If $\mathcal {M}_\mathcal {Y}\in \mbrick \mathcal {Y}$ and $\mathcal {M}_\mathcal {Z}\in \mbrick \mathcal {Z}$, then $i_*(\mathcal {M}_\mathcal {Y})\sqcup j_{!*}(\mathcal {M}_\mathcal {Z})\in \mbrick \mathcal {X}$.
\end{itemize}
\end{proposition}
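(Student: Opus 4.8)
The plan is to transfer the two defining conditions of ``left Schur'' --- extension-closure and the left Schurian property of simple objects --- across the recollement one at a time, exploiting that the hypothesis ``$i^!$ exact'' makes all four functors $i_*,\,i^!,\,j^*,\,j_*$ exact. Exactness of $i_*,j^*$ is automatic (Remark \ref{six-functor}(1)) and $i^!$ is exact by assumption; the first step I would record is that $j_*$ is then exact too. Indeed, $i^!$ exact gives $i^*j_*=0$ (Proposition \ref{intermediate}(1)), so if $W$ denotes the cokernel of $j_*$ applied to a short exact sequence in $\mathcal{Z}$, then applying the exact functor $j^*$ shows $j^*W=0$ (whence $W\in\Image i_*$) and applying the right-exact functor $i^*$ shows $i^*W=0$; together these force $W=0$. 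Alongside this I would record two bookkeeping facts used throughout: $i_*$ maps $\mathcal{E}_{\mathcal{Y}}$ into $\mathcal{E}_{\mathcal{X}}$ and $j_*$ maps $\mathcal{E}_{\mathcal{Z}}$ into $\mathcal{E}_{\mathcal{X}}$ (from $j^*i_*=0$, $i^!i_*\cong\mathrm{id}$, $j^*j_*\cong\mathrm{id}$, $i^!j_*=0$, using that all subcategories contain $0$), and that $\Hom(i_*A,j_*B)=0=\Hom(j_*B,i_*A)$ (from $i^!j_*=0$ and $i^*j_*=0$).

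First I would settle extension-closure in both directions. Applying the exact functors $j^*$ and $i^!$ to a short exact sequence in $\mathcal{X}$ yields short exact sequences in $\mathcal{Z}$ and $\mathcal{Y}$, so reading off the definition of $\mathcal{E}_{\mathcal{X}}$ gives immediately that $\mathcal{E}_{\mathcal{Y}},\mathcal{E}_{\mathcal{Z}}$ extension-closed $\Rightarrow\mathcal{E}_{\mathcal{X}}$ extension-closed. For the converse I would push a short exact sequence of $\mathcal{Z}$ (resp. $\mathcal{Y}$) into $\mathcal{X}$ by the exact fully faithful functor $j_*$ (resp. $i_*$): the outer terms lie in $\mathcal{E}_{\mathcal{X}}$ by the bookkeeping facts, extension-closure of $\mathcal{E}_{\mathcal{X}}$ places the middle term in $\mathcal{E}_{\mathcal{X}}$, and applying $j^*$ (resp. $i^!$) recovers the original middle term in $\mathcal{E}_{\mathcal{Z}}$ (resp. $\mathcal{E}_{\mathcal{Y}}$). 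Exactness of $j_*$ is essential here.

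The main work, and the step I expect to be the real obstacle, is to identify the simple objects of $\mathcal{E}_{\mathcal{X}}$ once all three subcategories are known to be extension-closed; the target is $\opname{sim}\mathcal{E}_{\mathcal{X}}=i_*(\opname{sim}\mathcal{E}_{\mathcal{Y}})\sqcup j_*(\opname{sim}\mathcal{E}_{\mathcal{Z}})$ (and $j_*\cong j_{!*}$ here, linking to Proposition \ref{intermediate}). For a simple $M$ of $\mathcal{E}_{\mathcal{X}}$ I would use the canonical exact sequence $0\to i_*i^!M\to M\to j_*j^*M$: exactness of $i^!$ makes the counit $i_*i^!M\to M$ a monomorphism (its kernel is killed by both $i^!$ and $j^*$), and a $j^*/i^!$ computation shows both $i_*i^!M$ and the quotient $M/i_*i^!M$ lie in $\mathcal{E}_{\mathcal{X}}$, so simplicity forces one of them to vanish. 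If $j^*M=0$ then $M=i_*i^!M=i_*S$, and pulling a hypothetical decomposition back through the exact functor $i_*$ shows $S$ is simple in $\mathcal{E}_{\mathcal{Y}}$. If instead $i^!M=0$ then $M\hookrightarrow j_*j^*M$, and computing that the cokernel is annihilated by both $j^*$ and $i^!$ forces $M\cong j_*j^*M$; pushing a decomposition of $j^*M$ forward through the exact functor $j_*$ then shows $j^*M$ is simple in $\mathcal{E}_{\mathcal{Z}}$. The reverse inclusions --- that $i_*S$ and $j_*T$ are simple in $\mathcal{E}_{\mathcal{X}}$ for $S,T$ simple in the edges --- follow by applying $j^*$ (resp. $i^!$) to a hypothetical proper decomposition and invoking the $\Hom$-vanishing between $\Image i_*$ and $\Image j_*$ to eliminate the mixed pieces.

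Finally I would transfer the Schurian property via the adjunctions. For (1)$\Rightarrow$(2): a morphism $f\colon i_*S\to X$ with $X\in\mathcal{E}_{\mathcal{X}}$ corresponds under $(i_*,i^!)$ to $\tilde f\colon S\to i^!X$ with $i^!X\in\mathcal{E}_{\mathcal{Y}}$, which is zero or monic as $S$ is left Schurian; using $j^*i_*=0$ to see $\Kernel f\in\Image i_*$ and then that $i^!$ kills it upgrades this to $f$ being zero or monic, and a symmetric argument handles $j_*T$ via $j^*$ together with the $\Hom$-vanishing. For (2)$\Rightarrow$(1): for $S\in\opname{sim}\mathcal{E}_{\mathcal{Y}}$ the object $i_*S$ is simple in $\mathcal{E}_{\mathcal{X}}$, hence left Schurian, and since $i_*$ is exact and fully faithful it reflects ``zero or monic'', so every $S\to Y$ with $Y\in\mathcal{E}_{\mathcal{Y}}$ is zero or monic; the case of $\mathcal{E}_{\mathcal{Z}}$ is identical using the exact fully faithful $j_*$. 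Assembling the three parts gives the equivalence. The single recurring point that requires the full strength of the hypothesis is the exactness of $j_*$: it is precisely what ``$i^!$ exact'' supplies, and without it the pushforward arguments in the extension-closure and simple-object steps would fail.
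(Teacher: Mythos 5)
There is a genuine gap, and it is a structural one: you have proved the wrong statement. The statement under review is Proposition \ref{intermediate}, which asserts four facts about the recollement functors themselves: that exactness of $i^!$ forces $i^*j_*=0$ and $j_{!*}\cong j_*$, the gluing of simple objects, and the gluing of semibricks and monobricks via the intermediate extension $j_{!*}$. What you have written is instead a proof sketch of Theorem \ref{main theorem} (the equivalence for the left Schur subcategories $\mathcal{E}_{\mathcal{Y}},\mathcal{E}_{\mathcal{Z}},\mathcal{E}_{\mathcal{X}}$), and your sketch \emph{consumes} the proposition rather than proving it: you explicitly invoke part (1) (``$i^!$ exact gives $i^*j_*=0$'') to deduce exactness of $j_*$, so as an argument for the proposition your text is circular on part (1), and it is entirely silent on parts (2)--(4). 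Your identification of $\opname{sim}\mathcal{E}_{\mathcal{X}}$ is exactly the kind of gluing that part (2) is supposed to supply, and your replacement of $j_{!*}$ by $j_*$ throughout is only legitimate \emph{after} part (1) is established; moreover parts (2)--(4) are stated \emph{without} assuming $i^!$ exact, so that replacement is not even available there. (The paper itself records this proposition without proof, as a summary of results from \cite{BBD,FP,YQ,Zh}, so the blind task was precisely to reconstruct those arguments; none of them appear in your text.)

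For the record, here is what a proof would need, so you can see how far the proposal is from it. For (1): let $\theta_Z\colon j_!Z\to j_*Z$ be the canonical map; its cokernel $C$ satisfies $j^*C=0$ (since $j^*\theta_Z$ is an isomorphism), hence $C\cong i_*V$; applying the \emph{exact} functor $i^!$ to $j_!Z\to j_*Z\to i_*V\to 0$ and using $i^!j_*=0$, $i^!i_*\cong\mathrm{id}$ gives $V=0$, so $\theta_Z$ is epic and $j_{!*}Z=\Image\theta_Z\cong j_*Z$; then applying the right exact $i^*$ to $0\to\Kernel\theta_Z\to j_!Z\to j_*Z\to 0$ and using $i^*j_!=0$ gives $i^*j_*=0$. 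For (2), with no exactness hypothesis, one argues as in \cite{BBD}: a simple $X$ with $j^*X=0$ is $i_*S$ for $S$ simple in $\mathcal{Y}$; otherwise $j^*X$ is simple in $\mathcal{Z}$ and $X\cong j_{!*}j^*X$, using the characterization of $j_{!*}Z$ as the unique extension of $Z$ having no nonzero subobject or quotient in $\Image i_*$. For (3) and (4) one needs that $j_{!*}$ is fully faithful (so it sends bricks to bricks), the vanishings $\Hom_{\mathcal{X}}(i_*Y,j_{!*}Z)=0=\Hom_{\mathcal{X}}(j_{!*}Z,i_*Y)$ (from the adjunctions together with $j_!Z\twoheadrightarrow j_{!*}Z\hookrightarrow j_*Z$ and $j^*i_*=0$), and, for the monobrick statement (4), that a nonzero map $f\colon j_{!*}S_1\to j_{!*}S_2$ has $j^*f$ nonzero, hence injective by the monobrick property in $\mathcal{Z}$, so $\Kernel f\in\Image i_*$ and therefore $\Kernel f=0$ because $j_{!*}S_1$ has no nonzero subobject in $\Image i_*$. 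None of these steps can be extracted from your proposal; what your proposal does contain (transfer of extension-closure, identification of simples in $\mathcal{E}_{\mathcal{X}}$, transfer of the Schurian property) belongs to Theorem \ref{main theorem} and should be compared against that proof instead.
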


\section{Left Schur subcategories in recollements}

In this section, we give the construction of left Schur subcategories in recollements of length abelian categories. The following useful result maybe well known for the experts. For the convenient of the reader, we give a proof here.
\begin{proposition}\label{filtration}
Let $\mathcal {A}$ be an abelian category and $0\rightarrow X\rightarrow Y\rightarrow Z\rightarrow 0$ a short exact sequence in $\mathcal {A}$. If $X\in \opname{Filt}\mathcal {C}$ for some $\mathcal {C}\subset \mathcal {A}$ and $Z\in \opname{Filt}\mathcal {\mathcal {D}}$ for some $\mathcal {D}\subset \mathcal {A}$, then $Y\in \opname{Filt}(\mathcal {C}\cup  \mathcal {D})$.
\end{proposition}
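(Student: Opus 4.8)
The plan is to build a filtration of $Y$ by stacking a given $\mathcal{C}$-filtration of the subobject $X$ at the bottom with a lifted copy of a $\mathcal{D}$-filtration of the quotient $Z$ on top.

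First I would fix a $\mathcal{C}$-filtration $0 = X_0 \subset X_1 \subset \cdots \subset X_m = X$ with $X_i/X_{i-1} \in \mathcal{C}$, and a $\mathcal{D}$-filtration $0 = Z_0 \subset Z_1 \subset \cdots \subset Z_n = Z$ with $Z_j/Z_{j-1} \in \mathcal{D}$; both exist by hypothesis. Regarding $X$ as a subobject of $Y$ via the monomorphism in the short exact sequence, the chain $0 = X_0 \subset \cdots \subset X_m = X$ is already a chain of subobjects of $Y$ whose successive factors lie in $\mathcal{C}$, and this serves as the lower part of the desired filtration.

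Next I would lift the filtration of $Z$. Writing $\pi\colon Y \to Z$ for the epimorphism, for each $j$ I set $Y_j := \pi^{-1}(Z_j)$, the preimage of $Z_j$; concretely this is the pullback of the monomorphism $Z_j \hookrightarrow Z$ along $\pi$, which is again a subobject of $Y$ since pullbacks of monomorphisms are monomorphisms. Then $Y_0 = \pi^{-1}(0) = \Kernel \pi = X$ and $Y_n = \pi^{-1}(Z) = Y$, giving a chain $X = Y_0 \subseteq Y_1 \subseteq \cdots \subseteq Y_n = Y$. The key point is that $\pi$ restricts to an epimorphism $Y_j \to Z_j$ with kernel $X$, so $Y_j/X \cong Z_j$ compatibly with the inclusions, and the third isomorphism theorem then yields
\[
Y_j/Y_{j-1} \cong (Y_j/X)\big/(Y_{j-1}/X) \cong Z_j/Z_{j-1} \in \mathcal{D}.
\]
In particular $Y_{j-1} \subseteq Y_j$ is proper precisely when $Z_{j-1} \subset Z_j$ is, so after discarding repetitions we obtain a genuine filtration.

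Concatenating the two chains gives
\[
0 = X_0 \subset \cdots \subset X_m = X = Y_0 \subseteq Y_1 \subseteq \cdots \subseteq Y_n = Y,
\]
whose successive subquotients are the $X_i/X_{i-1} \in \mathcal{C}$ and the $Y_j/Y_{j-1} \cong Z_j/Z_{j-1} \in \mathcal{D}$, hence all lie in $\mathcal{C} \cup \mathcal{D}$; this gives $Y \in \opname{Filt}(\mathcal{C} \cup \mathcal{D})$. The only delicate point, and the step I would take most care over, is carrying out the preimage construction and the identification $Y_j/Y_{j-1} \cong Z_j/Z_{j-1}$ purely in the language of abelian categories, via pullbacks and the isomorphism theorems rather than elementwise; everything else is bookkeeping.
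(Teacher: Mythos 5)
Your proposal is correct and follows essentially the same route as the paper's own proof: fix filtrations of $X$ and $Z$, lift the latter to subobjects $Y_j = \pi^{-1}(Z_j)$ of $Y$, identify $Y_j/Y_{j-1} \cong Z_j/Z_{j-1}$, and concatenate the two chains. The only difference is cosmetic — you justify the identification via pullbacks and the third isomorphism theorem, while the paper uses a commutative diagram of short exact sequences — so there is nothing to add.
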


\begin{proof}
Since $X\in \opname{Filt}\mathcal {C}$, we have a $\mathcal {C}$-filtration $0=X_0\subset X_1\subset \cdots \subset X_m=X$ with $X_i/X_{i-1}\in \mathcal {C}$ for each $i$. Since $Z\in \opname{Filt}\mathcal {D}$, we have a $\mathcal {D}$-filtration $0=Z_0\subset Z_1\subset \cdots \subset Z_n=Z$ with $Z_j/Z_{j-1}\in \mathcal {D}$ for each $j$. We can define $Y_j=\pi^{-1}(Z_j)$ for each $j$ Since $\pi : Y\rightarrow Z$ is surjective. Then we have a chain $X\cong Y_0\subset Y_1\subset \cdots \subset Y_n=Y$ and the following commutative diagram:
$$\xymatrix{
& 0\ar[r] &  X\ar[r]\ar@{=}[d] & Y_{j-1}\ar[r]^{\pi}\ar[d] & Z_{j-1}\ar[r]\ar[d] & 0&\\
& 0\ar[r] &X\ar[r] & Y_{j}\ar[r]^{\pi} &Z_{j}\ar[r] & 0.&}$$
It follows that $Y_{j}/Y_{j-1}\cong Z_{j}/Z_{j-1}\in \mathcal {D}$ for each $j$. Now we can construct a chain $0=X_0\subset X_1\subset \cdots \subset X_m=X\cong Y_0\subset Y_1\subset \cdots \subset Y_n=Y$, where $X_i/X_{i-1}\in \mathcal {C}$ for each $i$ and $Y_{j}/Y_{j-1}\in \mathcal {D}$ for each $j$. This chain is exactly a $\mathcal {C}\cup \mathcal {D}$-filtration of $Y$ and therefore $Y\in \opname{Filt}(\mathcal {C}\cup  \mathcal {D})$.
\end{proof}

Now we are ready to prove the following main result in this paper.
\begin{theorem}\label{main theorem}
Let $R(\mathcal {Y}, \mathcal {X}, \mathcal {Z})$ be a recollement of length abelian categories
$$\xymatrix@!C=2pc{
\mathcal {Y}\; \ar@{>->}[rr]|{i_{*}} && \mathcal {X} \ar@<-4.0mm>@{->>}[ll]_{i^{*}} \ar@{->>}[rr]|{j^{*}} \ar@{->>}@<4.0mm>[ll]^{i^{!}}&& \mathcal {Z} \ar@{>->}@<-4.0mm>[ll]_{j_{!}} \ar@{>->}@<4.0mm>[ll]^{j_{*}}
}.$$ Assume that $\mathcal {E}_{\mathcal {Y}}$ and $\mathcal {E}_{\mathcal {Z}}$ are respectively subcategories of $\mathcal {Y}$ and $\mathcal {Z}$, $\mathcal {E}_{\mathcal {X}}=\{X\in \mathcal {X}|j^{*}X\in\mathcal {E}_{\mathcal {Z}}, i^{!}X\in\mathcal {E}_{\mathcal {Y}}\}$ is a subcategory of $\mathcal {X}$. If $i^{!}$ is exact, then the following are equivalent:
\begin{itemize}
\item[\rm(1)] $\mathcal {E}_{\mathcal {Y}}$ and $\mathcal {E}_{\mathcal {Z}}$ are left Schur.
\item[\rm(2)] $\mathcal {E}_{\mathcal {X}}$ is left Schur.
\end{itemize} 
\end{theorem}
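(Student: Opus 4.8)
The plan is to reduce left Schurness to the monobrick picture of Theorem \ref{bijection} and the gluing Proposition \ref{intermediate}(4), using the exactness of $i^!$ to tame the a priori badly behaved functor $j_*$. The one structural tool I would establish first is a clean canonical sequence: for every $X\in\mathcal{X}$ there is a short exact sequence $0\to i_*i^!X\to X\to j_*j^*X\to 0$. The kernel of the unit $X\to j_*j^*X$ is always $i_*i^!X$; the content is surjectivity, which holds once $i^!$ is exact. Indeed, the cokernel $Q$ of $X\to j_*j^*X$ satisfies $j^*Q=0$ (as $j^*$ is exact and $j^*j_*\cong\mathrm{id}$), so $Q\cong i_*Q'$; applying the exact functor $i^!$ and using $i^!j_*=0$ from Remark \ref{six-functor}(2) forces $Q'=0$. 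Besides this sequence I will only use the identities $j^*i_*=0$, $i^!j_*=0$, $i^*i_*\cong i^!i_*\cong\mathrm{id}$, $j^*j_*\cong\mathrm{id}$, and the fact that $i_*$ is exact and faithful while $j_*$ is left exact and faithful.

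For (1)$\Rightarrow$(2), put $\mathcal{M}_{\mathcal{Y}}=\opname{sim}\mathcal{E}_{\mathcal{Y}}$, $\mathcal{M}_{\mathcal{Z}}=\opname{sim}\mathcal{E}_{\mathcal{Z}}$. By Theorem \ref{bijection} these are monobricks with $\mathcal{E}_{\mathcal{Y}}=\opname{Filt}\mathcal{M}_{\mathcal{Y}}$ and $\mathcal{E}_{\mathcal{Z}}=\opname{Filt}\mathcal{M}_{\mathcal{Z}}$; since $j_{!*}\cong j_*$ by Proposition \ref{intermediate}(1), the set $\mathcal{M}_{\mathcal{X}}:=i_*\mathcal{M}_{\mathcal{Y}}\sqcup j_*\mathcal{M}_{\mathcal{Z}}$ is a monobrick by Proposition \ref{intermediate}(4). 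I would then prove $\mathcal{E}_{\mathcal{X}}=\opname{Filt}\mathcal{M}_{\mathcal{X}}$, which by Theorem \ref{bijection} is exactly the assertion that $\mathcal{E}_{\mathcal{X}}$ is left Schur. The inclusion $\opname{Filt}\mathcal{M}_{\mathcal{X}}\subseteq\mathcal{E}_{\mathcal{X}}$ is routine: as $j^*$ and $i^!$ are exact they send a $\mathcal{M}_{\mathcal{X}}$-filtration to a filtration, and the computations $j^*i_*\mathcal{M}_{\mathcal{Y}}=0$, $j^*j_*\mathcal{M}_{\mathcal{Z}}=\mathcal{M}_{\mathcal{Z}}$, $i^!i_*\mathcal{M}_{\mathcal{Y}}=\mathcal{M}_{\mathcal{Y}}$, $i^!j_*\mathcal{M}_{\mathcal{Z}}=0$ give $j^*X\in\mathcal{E}_{\mathcal{Z}}$ and $i^!X\in\mathcal{E}_{\mathcal{Y}}$.

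The reverse inclusion $\mathcal{E}_{\mathcal{X}}\subseteq\opname{Filt}\mathcal{M}_{\mathcal{X}}$ is the heart of the matter, and I would prove it by induction on the length of $X$, using the canonical sequence and Proposition \ref{filtration}. For $X\in\mathcal{E}_{\mathcal{X}}$ the term $i_*i^!X$ lies in $\opname{Filt}(i_*\mathcal{M}_{\mathcal{Y}})$ because $i^!X\in\mathcal{E}_{\mathcal{Y}}$ and $i_*$ is exact, so by Proposition \ref{filtration} it suffices to treat $j_*Z$ for $Z:=j^*X\in\mathcal{E}_{\mathcal{Z}}$. Taking the bottom step $Z_1\subseteq Z$ of a $\mathcal{M}_{\mathcal{Z}}$-filtration and applying the left exact $j_*$ gives $0\to j_*Z_1\to j_*Z\to W\to 0$ with $W=\Image(j_*Z\to j_*(Z/Z_1))$. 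Here exactness of $i^!$ pays off: applying $i^!$ and $j^*$ yields $i^!W=0$ and $j^*W\cong Z/Z_1$, whence the canonical sequence for $W$ degenerates to $W\cong j_*(Z/Z_1)$; since $Z/Z_1$ is shorter, induction gives $W\in\opname{Filt}\mathcal{M}_{\mathcal{X}}$, while $j_*Z_1\in j_*\mathcal{M}_{\mathcal{Z}}\subseteq\mathcal{M}_{\mathcal{X}}$, so Proposition \ref{filtration} gives $j_*Z\in\opname{Filt}\mathcal{M}_{\mathcal{X}}$.

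For (2)$\Rightarrow$(1) I would argue directly. From $j^*i_*=0$, $i^!i_*\cong\mathrm{id}$ one reads off $i_*Y\in\mathcal{E}_{\mathcal{X}}\iff Y\in\mathcal{E}_{\mathcal{Y}}$, and from $i^!j_*=0$, $j^*j_*\cong\mathrm{id}$ one reads off $j_*Z\in\mathcal{E}_{\mathcal{X}}\iff Z\in\mathcal{E}_{\mathcal{Z}}$ (both using $0\in\mathcal{E}_{\mathcal{Y}},\mathcal{E}_{\mathcal{Z}}$, which holds as $0\in\mathcal{E}_{\mathcal{X}}$). Extension-closedness of $\mathcal{E}_{\mathcal{Y}}$ follows by applying the exact $i_*$ and using that $\mathcal{E}_{\mathcal{X}}$ is extension-closed; for $\mathcal{E}_{\mathcal{Z}}$, given $0\to Z'\to Z\to Z''\to 0$ I apply $j_*$, replace the possibly non-surjective map by its image $W$, verify $i^!W=0$ and $j^*W\cong Z''$ so that $W\in\mathcal{E}_{\mathcal{X}}$, and deduce $j_*Z\in\mathcal{E}_{\mathcal{X}}$ and hence $Z\in\mathcal{E}_{\mathcal{Z}}$. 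For the Schur condition I would show that $i_*$ and $j_*$ carry simple objects of $\mathcal{E}_{\mathcal{Y}}$ and $\mathcal{E}_{\mathcal{Z}}$ to simple objects of $\mathcal{E}_{\mathcal{X}}$: for $i_*$ using that $\Image i_*=\ker j^*$ is a Serre subcategory, and for $j_*$ using $\Hom(i_*A,j_*B)=0$ together with exactness of $i^!$ to rule out the two possible splittings; the left Schurian property of these simples then transports back along the faithful, kernel-preserving functors $i_*,j_*$. The main obstacle throughout, and the precise reason the hypothesis on $i^!$ is needed, is the non-exactness of $j_*$: one cannot simply push short exact sequences or filtrations from $\mathcal{Z}$ to $\mathcal{X}$. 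The device that rescues both directions is $i^!j_*=0$ together with exactness of $i^!$, which forces the image $W$ of any map out of a $j_*$-object to satisfy $i^!W=0$ and hence, via the canonical sequence, to be again of the form $j_*(\,-\,)$, keeping the inductions inside a class on which $j_*$ is controllable.
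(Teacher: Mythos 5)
Your proposal is correct, and its skeleton coincides with the paper's: both run each direction through the bijection of Theorem \ref{bijection}, the glued monobrick $\mathcal{M}_{\mathcal{X}}=i_*(\mathcal{M}_{\mathcal{Y}})\sqcup j_*(\mathcal{M}_{\mathcal{Z}})$ of Proposition \ref{intermediate} (using $j_{!*}\cong j_*$), the identification $\mathcal{E}_{\mathcal{X}}=\opname{Filt}\mathcal{M}_{\mathcal{X}}$ obtained from the canonical sequence $0\to i_*i^!X\to X\to j_*j^*X\to 0$ together with Proposition \ref{filtration}, and, for $(2)\Rightarrow(1)$, transport of extensions and simple objects along $i_*$ and $j_*$. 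The genuine difference lies in how $j_*$ is controlled. The paper invokes \cite[Lemma 3.1(4)]{FZ}, by which exactness of $i^!$ forces $j_*$ to be exact; this yields the canonical short exact sequence and the equality $j_*(\opname{Filt}\mathcal{M}_{\mathcal{Z}})=\opname{Filt}(j_*\mathcal{M}_{\mathcal{Z}})$ at once, and makes the $\mathcal{E}_{\mathcal{Z}}$ half of $(2)\Rightarrow(1)$ genuinely ``similar'' to the $\mathcal{E}_{\mathcal{Y}}$ half. You never use exactness of $j_*$: you derive the canonical sequence by hand (the cokernel $Q$ of the unit has $j^*Q=0$, hence $Q\cong i_*Q'$, and then $i^!Q=0$ by exactness of $i^!$ and $i^!j_*=0$), and you replace the push-forward of filtrations by an induction on length in which the image $W$ of $j_*Z\to j_*(Z/Z_1)$ is shown to satisfy $i^!W=0$ and therefore $W\cong j_*(Z/Z_1)$. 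That image trick is, in substance, a proof of exactly the special case of the Feng--Zhang lemma that is needed ($j_*$ preserves the relevant surjections), so your route is self-contained where the paper's is cited; the cost is length, the gain is that the precise role of the hypothesis on $i^!$ becomes visible at every step, and that you actually write out the $\mathcal{E}_{\mathcal{Z}}$ case of $(2)\Rightarrow(1)$, which the paper dismisses as ``similar'' even though, without the exactness of $j_*$, it is not symmetric to the $\mathcal{E}_{\mathcal{Y}}$ case. One small blemish: the phrase about ruling out ``the two possible splittings'' for simplicity of $j_*B$ is vague as stated, but the correct mechanism is spelled out in your closing paragraph --- exactness of $i^!$ plus $i^!j_*=0$ forces any subobject or quotient of $j_*B$ lying in $\mathcal{E}_{\mathcal{X}}$ to have vanishing $i^!$, hence by the canonical sequence to be of the form $j_*(-)$, so a destabilizing sequence for $j_*B$ would descend along $j^*$ to one for $B$ --- and with that substituted in, the argument is complete.
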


\begin{proof}
$(1)\Rightarrow (2)$: Assume that $\mathcal {E}_{\mathcal {Y}}$ and $\mathcal {E}_{\mathcal {Z}}$ are left Schur. By Theorem \ref{bijection}, $\mathcal {M}_{\mathcal {Y}}:=\opname{sim} \mathcal {E}_{\mathcal {Y}}\in \mbrick \mathcal {Y}$ and $\mathcal {M}_{\mathcal {Z}}:=\opname{sim} \mathcal {E}_{\mathcal {Z}}\in \mbrick \mathcal {Z}$. It follows that $\mathcal {E}_{\mathcal {Y}}=\opname{Filt} \mathcal {M}_{\mathcal {Y}}$ and $\mathcal {E}_{\mathcal {Z}}=\opname{Filt} \mathcal {M}_{\mathcal {Z}}$. By Proposition \ref{intermediate}, we have $\mathcal {M}_{\mathcal {X}}:=i_*(\mathcal {M}_{\mathcal {Y}})\sqcup j_*(\mathcal {M}_{\mathcal {Z}})\in \mbrick \mathcal {X}$. We only have to prove that $\mathcal {E}_{\mathcal {X}}=\opname{Filt} \mathcal {M}_{\mathcal {X}}$. Then $\mathcal {E}_{\mathcal {X}}$ is left Schur by Theorem \ref{bijection}.

Let $X\in \opname{Filt} \mathcal {M}_{\mathcal {X}}$. Since $i^!$ and $j^*$ are exact, by Remark \ref{six-functor} we have $i^!X\in i^!(\opname{Filt} \mathcal {M}_{\mathcal {X}})=\opname{Filt} i^!(\mathcal {M}_{\mathcal {X}})=\opname{Filt} \mathcal {M}_{\mathcal {Y}}=\mathcal {E}_{\mathcal {Y}}$ and $j^*X\in j^*(\opname{Filt} \mathcal {M}_{\mathcal {X}})=\opname{Filt} j^*(\mathcal {M}_{\mathcal {X}})=\opname{Filt} \mathcal {M}_{\mathcal {Z}}=\mathcal {E}_{\mathcal {Z}}$. It follows that $X\in \mathcal {E}_{\mathcal {X}}$. Thus $\opname{Filt} \mathcal {M}_{\mathcal {X}}\subset \mathcal {E}_{\mathcal {X}}$.

Conversely, suppose that $X\in \mathcal {E}_{\mathcal {X}}$. Since $i^!$ is exact, by \cite[Lemma 3.1(4)]{FZ}, $j_*$ is also exact and we have an exact sequence $0\rightarrow i_{*}i^{!}X\rightarrow X\rightarrow j_{*}j^{*}X\rightarrow 0$. By the definition of $\mathcal {E}_{\mathcal {X}}$, we have $i_{*}i^{!}X\in i_*(\mathcal {E}_{\mathcal {Y}})=i_*(\opname{Filt} \mathcal {M}_{\mathcal {Y}})=\opname{Filt} i_*(\mathcal {M}_{\mathcal {Y}})$ and $j_{*}j^{*}X\in j_*(\mathcal {E}_{\mathcal {Z}})=j_*(\opname{Filt} \mathcal {M}_{\mathcal {Z}})=\opname{Filt} j_*(\mathcal {M}_{\mathcal {Z}})$. By Proposition \ref{filtration}, we have $X\in \opname{Filt} (i_*(\mathcal {M}_{\mathcal {Y}})\sqcup  j_*(\mathcal {M}_{\mathcal {Z}}))=\opname{Filt} \mathcal {M}_{\mathcal {X}}$. It follows that $\mathcal {E}_{\mathcal {X}}=\opname{Filt} \mathcal {M}_{\mathcal {X}}$.

$(2)\Rightarrow (1)$: Assume that $\mathcal {E}_{\mathcal {X}}$ is left Schur. We only prove that $\mathcal {E}_{\mathcal {Y}}$ is left Schur. The proof of $\mathcal {E}_{\mathcal {Z}}$ being left Schur when $\mathcal {E}_{\mathcal {X}}$ is left Schur is similar. For any short exact sequence $0\longrightarrow L\longrightarrow M\longrightarrow N\longrightarrow 0$ in $\mathcal{Y}$ with $L\in \mathcal {E}_{\mathcal {Y}}$ and $N\in \mathcal {E}_{\mathcal {Y}}$, we have a short exact sequence $0\longrightarrow i_*L\longrightarrow i_*M\longrightarrow i_*N\longrightarrow 0$. It is easy to check that $i_*L\in \mathcal {E}_{\mathcal {X}}$ and $i_*N\in \mathcal {E}_{\mathcal {X}}$. Then $i_*M\in \mathcal {E}_{\mathcal {X}}$ since $\mathcal {E}_{\mathcal {X}}$ is extension-closed. By the definition of $\mathcal {E}_{\mathcal {X}}$, we have $M\in \mathcal {E}_{\mathcal {Y}}$. Thus $\mathcal {E}_{\mathcal {Y}}$ is extension-closed.

Let $Y$ be a simple object in $\mathcal {E}_{\mathcal {Y}}$ and $f: Y\longrightarrow Y'$ a morphism with $Y'\in \mathcal {E}_{\mathcal {Y}}$. By Proposition \ref{intermediate} and the definition of $\mathcal {E}_{\mathcal {X}}$, $i_*Y$ is a simple object in $\mathcal {E}_{\mathcal {X}}$ and $i_*f: i_*Y\longrightarrow i_*Y'$ is a morphism with $i_*Y'\in \mathcal {E}_{\mathcal {X}}$. Thus $i_*f$ is either zero or an injection since $\mathcal {E}_{\mathcal {X}}$ is left Schur. Since $i^!$ is exact, it follows that $f\cong i^!i_*f$ is either zero or an injection. We have finished to prove that $\mathcal {E}_{\mathcal {Y}}$ is left Schur.
\end{proof}
Now we show that the construction in Theorem \ref{main theorem} restricts to wide subcategories and torsion-free classes.
\begin{theorem}\label{theorem-wide}
Let $R(\mathcal {Y}, \mathcal {X}, \mathcal {Z})$ be a recollement of length abelian categories.
Assume that $\mathcal {W}_{\mathcal {Y}}$ and $\mathcal {W}_{\mathcal {Z}}$ are respectively subcategories of $\mathcal {Y}$ and $\mathcal {Z}$, $\mathcal {W}_{\mathcal {X}}=\{X\in \mathcal {X}|j^{*}X\in\mathcal {W}_{\mathcal {Z}}, i^{!}X\in\mathcal {W}_{\mathcal {Y}}\}$ is a subcategory of $\mathcal {X}$. If $i^{!}$ is exact, then the following are equivalent:
\begin{itemize}
\item[\rm(1)] $\mathcal {W}_{\mathcal {Y}}$ and $\mathcal {W}_{\mathcal {Z}}$ are wide.
\item[\rm(2)] $\mathcal {W}_{\mathcal {X}}$ is wide.
\end{itemize}
\end{theorem}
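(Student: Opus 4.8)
The plan is to exploit that a subcategory is wide precisely when it is extension-closed and closed under kernels and cokernels, and that every wide subcategory is in particular left Schur (it corresponds to a semibrick under the bijection of Theorem \ref{bijection}). Since Theorem \ref{main theorem} already settles the equivalence at the level of left Schur subcategories, I would first invoke it to dispose of extension-closure in both directions: if $\mathcal{W}_{\mathcal{Y}}$ and $\mathcal{W}_{\mathcal{Z}}$ are wide they are left Schur, so $\mathcal{W}_{\mathcal{X}}$ is left Schur and hence extension-closed, and conversely. This reduces the whole statement to checking closure under kernels and cokernels, which is where the real content lies.

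For $(1)\Rightarrow(2)$, the key point is that both defining functors of $\mathcal{W}_{\mathcal{X}}$ are exact: $j^{*}$ is exact in any recollement (Remark \ref{six-functor}(1)) and $i^{!}$ is exact by hypothesis. Given a morphism $f\colon X\to X'$ in $\mathcal{W}_{\mathcal{X}}$, I would apply $j^{*}$ and $i^{!}$ to $\Kernel f$ and $\Cokernel f$; exactness yields $j^{*}(\Kernel f)\cong\Kernel(j^{*}f)$ and $i^{!}(\Kernel f)\cong\Kernel(i^{!}f)$, and similarly for cokernels. Since $j^{*}f$ is a morphism in $\mathcal{W}_{\mathcal{Z}}$ and $i^{!}f$ a morphism in $\mathcal{W}_{\mathcal{Y}}$, closure of $\mathcal{W}_{\mathcal{Z}}$ and $\mathcal{W}_{\mathcal{Y}}$ under kernels and cokernels places these images back in $\mathcal{W}_{\mathcal{Z}}$ and $\mathcal{W}_{\mathcal{Y}}$; by the very definition of $\mathcal{W}_{\mathcal{X}}$ this gives $\Kernel f,\Cokernel f\in\mathcal{W}_{\mathcal{X}}$.

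For $(2)\Rightarrow(1)$, I would transport data along the fully faithful embeddings $i_{*}$ and $j_{*}$, both of which are exact here ($i_{*}$ always, $j_{*}$ because $i^{!}$ is exact, as recorded in the proof of Theorem \ref{main theorem}). Using $i^{!}i_{*}\cong\mathrm{id}$, $j^{*}i_{*}=0$, $j^{*}j_{*}\cong\mathrm{id}$ and $i^{!}j_{*}=0$ (Remark \ref{six-functor}(2)), together with $0$ lying in $\mathcal{W}_{\mathcal{Y}}$ and $\mathcal{W}_{\mathcal{Z}}$, I first record the equivalences $Y\in\mathcal{W}_{\mathcal{Y}}\iff i_{*}Y\in\mathcal{W}_{\mathcal{X}}$ and $Z\in\mathcal{W}_{\mathcal{Z}}\iff j_{*}Z\in\mathcal{W}_{\mathcal{X}}$. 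Then for $f\colon Y\to Y'$ in $\mathcal{W}_{\mathcal{Y}}$ I apply $i_{*}$ to obtain a morphism in $\mathcal{W}_{\mathcal{X}}$; wideness of $\mathcal{W}_{\mathcal{X}}$ gives $\Kernel(i_{*}f),\Cokernel(i_{*}f)\in\mathcal{W}_{\mathcal{X}}$, and exactness of $i_{*}$ identifies these with $i_{*}(\Kernel f)$ and $i_{*}(\Cokernel f)$, so the equivalence above returns $\Kernel f,\Cokernel f\in\mathcal{W}_{\mathcal{Y}}$. The argument for $\mathcal{W}_{\mathcal{Z}}$ is identical with $j_{*}$ in place of $i_{*}$.

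The hard part will be organizational rather than conceptual: one must make sure that every functor used genuinely commutes with the formation of kernels and cokernels, and the only nontrivial instance is $j_{*}$, whose exactness is not automatic and is precisely what the hypothesis ``$i^{!}$ exact'' secures (via $j_{!*}\cong j_{*}$ in Proposition \ref{intermediate}(1)). Once this exactness is in hand, the proof runs in complete parallel to that of Theorem \ref{main theorem}, and no further obstacle arises.
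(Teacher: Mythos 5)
Your proposal is correct, but for the direction $(1)\Rightarrow(2)$ it takes a genuinely different route from the paper's. The paper proves $(1)\Rightarrow(2)$ entirely through the brick dictionary: it forms the glued semibrick $\mathcal{S}_{\mathcal{X}}:=i_*(\opname{sim}\mathcal{W}_{\mathcal{Y}})\sqcup j_*(\opname{sim}\mathcal{W}_{\mathcal{Z}})$ via Proposition \ref{intermediate}, identifies $\mathcal{W}_{\mathcal{X}}=\opname{Filt}\mathcal{S}_{\mathcal{X}}$ by repeating the argument of Theorem \ref{main theorem}, and concludes wideness from the restricted bijection $\opname{wide}\leftrightarrow\sbrick$ of Theorem \ref{bijection}. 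You instead verify the defining closure properties directly: kernel and cokernel closure of $\mathcal{W}_{\mathcal{X}}$ follows because the exact functors $j^{*}$ and $i^{!}$ commute with kernels and cokernels and land in $\mathcal{W}_{\mathcal{Z}}$ and $\mathcal{W}_{\mathcal{Y}}$, while extension-closure is delegated to Theorem \ref{main theorem} (using that wide subcategories are left Schur). Your route is more elementary, needing no semibricks at all for this direction; the paper's route yields as a byproduct the explicit description $\opname{sim}\mathcal{W}_{\mathcal{X}}=i_*(\opname{sim}\mathcal{W}_{\mathcal{Y}})\sqcup j_*(\opname{sim}\mathcal{W}_{\mathcal{Z}})$ of the glued semibrick, which is exactly the kind of information the paper reuses later for cofinally closed monobricks. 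For $(2)\Rightarrow(1)$ your argument (transport along the exact embeddings $i_*$ and $j_*$, using $i^{!}i_*\cong\mathrm{id}$, $j^{*}j_*\cong\mathrm{id}$, $j^{*}i_*=0$, $i^{!}j_*=0$, and $0\in\mathcal{W}_{\mathcal{Y}},\mathcal{W}_{\mathcal{Z}}$) coincides with the paper's proof, and your explicit remark that the zero object must lie in the edge subcategories is a point the paper leaves implicit. One small correction: exactness of $j_*$ follows from exactness of $i^{!}$ by \cite[Lemma 3.1(4)]{FZ}, as you correctly invoke first; your closing parenthetical attributing it to $j_{!*}\cong j_*$ from Proposition \ref{intermediate}(1) is not a valid justification, since that isomorphism does not by itself give exactness, so drop that aside and keep the citation to \cite{FZ}.
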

\begin{proof}
$(1)\Rightarrow (2)$: Assume that $\mathcal {W}_{\mathcal {Y}}$ and $\mathcal {W}_{\mathcal {Z}}$ are wide. By Theorem \ref{bijection}, $\mathcal {S}_{\mathcal {Y}}:=\opname{sim} \mathcal {W}_{\mathcal {Y}}\in \sbrick \mathcal {Y}$ and $\mathcal {S}_{\mathcal {Z}}:=\opname{sim} \mathcal {W}_{\mathcal {Z}}\in \sbrick \mathcal {Z}$. It follows that $\mathcal {W}_{\mathcal {Y}}=\opname{Filt} \mathcal {S}_{\mathcal {Y}}$ and $\mathcal {W}_{\mathcal {Z}}=\opname{Filt} \mathcal {S}_{\mathcal {Z}}$. By Proposition \ref{intermediate}, we have $\mathcal {S}_{\mathcal {X}}:=i_*(\mathcal {S}_{\mathcal {Y}})\sqcup j_*(\mathcal {S}_{\mathcal {Z}})\in \sbrick \mathcal {X}$. Using the same argument as in the proof of Theorem \ref{main theorem}, we can prove that $\mathcal {W}_{\mathcal {X}}=\opname{Filt} \mathcal {S}_{\mathcal {X}}$. Then $\mathcal {W}_{\mathcal {X}}$ is wide by Theorem \ref{bijection}.

$(2)\Rightarrow (1)$: Assume that $\mathcal {W}_{\mathcal {X}}$ is wide. We only prove that $\mathcal {W}_{\mathcal {Z}}$ is wide. The proof of $\mathcal {W}_{\mathcal {Y}}$ being wide when $\mathcal {W}_{\mathcal {X}}$ is wide is similar. Let $f': M'\rightarrow N'$ be a morphism in $\mathcal {W}_{\mathcal {Z}}$. we have the following exact sequence: 
\begin{equation}
0\longrightarrow {\rm Ker}f'\longrightarrow M'\buildrel {f'} \over\longrightarrow N'\longrightarrow{\rm Coker}f'\longrightarrow 0.
\end{equation}
By \cite[Lemma 3.1(4)]{FZ}, $j_*$ is exact. Applying the functor $j_*$ to (3.1), we have the following commutative diagram:
$$\xymatrix{
0\ar[r] & j_{*}({\rm Ker}f')\ar[r]\ar@{.>}[d] & j_{*}M'\ar[r]^{j_{*}f'}\ar[d]^{=} & j_{*}N'\ar[d]^{=}\ar[r] & j_{*}({\rm Coker}f')\ar[r]\ar@{.>}[d] & 0\\
0\ar[r] & {\rm Ker}j_{*}f'\ar[r] & j_*M'\ar[r]^{j_{*}f'} & j_*N'\ar[r] & {\rm Coker}j_{*}f'\ar[r] & 0
.}$$
It is easy to check $j_*M'\in \mathcal {W}_{\mathcal {X}}$ and $j_*N'\in \mathcal {W}_{\mathcal {X}}$. Then we have $j_{*}({\rm Ker}f')\cong {\rm Ker}j_{*}f'\in \mathcal {W}_{\mathcal {X}}$ and $j_{*}({\rm Coker}f')\cong{\rm Coker}j_{*}f'\in \mathcal {W}_{\mathcal {X}}$ since $\mathcal {W}_{\mathcal {X}}$ is wide. It follows that ${\rm Ker}f'\in \mathcal {W}_{\mathcal {Z}}$ and ${\rm Coker}f'\in\mathcal {W}_{\mathcal {Z}}$. Thus $\mathcal {W}_{\mathcal {Z}}$ is closed under kernels and cokernels.

For any short exact sequence $0\longrightarrow L'\longrightarrow M'\longrightarrow N'\longrightarrow 0$ in $\mathcal{Z}$ with $L'\in \mathcal {W}_{\mathcal {Z}}$ and $N'\in \mathcal {W}_{\mathcal {Z}}$, we have a short exact sequence $0\longrightarrow j_*L'\longrightarrow j_*M'\longrightarrow j_*N'\longrightarrow 0$ since $j_*$ is exact by \cite[Lemma 3.1(4)]{FZ}. It is easy to check that $j_*L'\in \mathcal {W}_{\mathcal {X}}$ and $j_*N'\in \mathcal {W}_{\mathcal {X}}$. Then $j_*M'\in \mathcal {W}_{\mathcal {X}}$ since $\mathcal {W}_{\mathcal {X}}$ wide. By the definition of $\mathcal {E}_{\mathcal {X}}$, we have $M'\in \mathcal {W}_{\mathcal {Z}}$. We have finished to prove that $\mathcal {W}_{\mathcal {Z}}$ is wide.
\end{proof}
The following result about torsion-free classes can be deduced quickly from \cite{MH}. Here the functor $i^!$ doesn't need to be exact.
\begin{theorem}\label{theorem-torsion-free}
Let $R(\mathcal {Y}, \mathcal {X}, \mathcal {Z})$ be a recollement of length abelian categories.
Assume that $\mathcal {F}_{\mathcal {Y}}$ and $\mathcal {F}_{\mathcal {Z}}$ are respectively subcategories of $\mathcal {Y}$ and $\mathcal {Z}$, $\mathcal {F}_{\mathcal {X}}=\{X\in \mathcal {X}|j^{*}X\in\mathcal {F}_{\mathcal {Z}}, i^{!}X\in\mathcal {F}_{\mathcal {Y}}\}$ is a subcategory of $\mathcal {X}$. Then the following are equivalent:
\begin{itemize}
\item[\rm(1)] $\mathcal {F}_{\mathcal {Y}}$ and $\mathcal {F}_{\mathcal {Z}}$ are torsion-free.
\item[\rm(2)] $\mathcal {F}_{\mathcal {X}}$ is a torsion-free.
\end{itemize}
\end{theorem}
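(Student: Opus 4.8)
The plan is to verify directly, from the definition of a torsion-free class as a full subcategory closed under subobjects and extensions, that $\mathcal{F}_{\mathcal{X}}$ has these two closure properties if and only if $\mathcal{F}_{\mathcal{Y}}$ and $\mathcal{F}_{\mathcal{Z}}$ do. The only input I need is the formal structure of the recollement: that $j^{*}$ and $i_{*}$ are exact, that $i^{!}$ and $j_{*}$ are left exact (being right adjoints), and the identities $i^{!}i_{*}\cong\mathrm{id}$, $j^{*}i_{*}=0$, $j^{*}j_{*}\cong\mathrm{id}$, $i^{!}j_{*}=0$ from Remark \ref{six-functor} and Definition 2.6(3). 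Crucially I will never use right exactness of $i^{!}$; the gaps left by mere left exactness will always be filled by closure under subobjects, which is exactly why the hypothesis that $i^{!}$ be exact can be dropped here.

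For $(1)\Rightarrow(2)$, suppose $\mathcal{F}_{\mathcal{Y}}$ and $\mathcal{F}_{\mathcal{Z}}$ are torsion-free. For subobjects: given a monomorphism $Y\hookrightarrow X$ with $X\in\mathcal{F}_{\mathcal{X}}$, the left exact functors $j^{*}$ and $i^{!}$ send it to monomorphisms $j^{*}Y\hookrightarrow j^{*}X$ and $i^{!}Y\hookrightarrow i^{!}X$, so subobject-closure of $\mathcal{F}_{\mathcal{Z}}$ and of $\mathcal{F}_{\mathcal{Y}}$ forces $Y\in\mathcal{F}_{\mathcal{X}}$. For extensions: given $0\to X'\to X\to X''\to 0$ with $X',X''\in\mathcal{F}_{\mathcal{X}}$, exactness of $j^{*}$ gives a short exact sequence in $\mathcal{Z}$, so $j^{*}X\in\mathcal{F}_{\mathcal{Z}}$ by extension-closure; applying the left exact $i^{!}$ yields $0\to i^{!}X'\to i^{!}X\to i^{!}X''$, and the image of $i^{!}X\to i^{!}X''$ lies in $\mathcal{F}_{\mathcal{Y}}$ by subobject-closure, whence $i^{!}X$, being an extension of that image by $i^{!}X'\in\mathcal{F}_{\mathcal{Y}}$, lies in $\mathcal{F}_{\mathcal{Y}}$. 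Thus $X\in\mathcal{F}_{\mathcal{X}}$.

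For $(2)\Rightarrow(1)$, the guiding observation is that $i_{*}$ and $j_{*}$ detect membership: using $i^{!}i_{*}\cong\mathrm{id}$ and $j^{*}i_{*}=0$ one gets $i_{*}Y\in\mathcal{F}_{\mathcal{X}}\iff Y\in\mathcal{F}_{\mathcal{Y}}$, and using $j^{*}j_{*}\cong\mathrm{id}$ and $i^{!}j_{*}=0$ one gets $j_{*}Z\in\mathcal{F}_{\mathcal{X}}\iff Z\in\mathcal{F}_{\mathcal{Z}}$ (here one invokes $0\in\mathcal{F}_{\mathcal{Y}},\mathcal{F}_{\mathcal{Z}}$). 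Since $i_{*}$ is exact, transporting subobjects and short exact sequences through $i_{*}$ and invoking the two closure properties of $\mathcal{F}_{\mathcal{X}}$ immediately shows $\mathcal{F}_{\mathcal{Y}}$ is torsion-free; for $\mathcal{F}_{\mathcal{Z}}$, subobject-closure is identical via the left exact $j_{*}$. The one delicate point is extension-closure of $\mathcal{F}_{\mathcal{Z}}$: given $0\to L'\to M'\to N'\to 0$ with $L',N'\in\mathcal{F}_{\mathcal{Z}}$, the merely left exact $j_{*}$ does not produce a short exact sequence. I would fix this by putting $I=\mathrm{Im}(j_{*}M'\to j_{*}N')$; then $I$ is a subobject of $j_{*}N'\in\mathcal{F}_{\mathcal{X}}$, hence $I\in\mathcal{F}_{\mathcal{X}}$ by subobject-closure, while left exactness of $j_{*}$ identifies $j_{*}L'$ with the kernel of $j_{*}M'\to I$, giving $0\to j_{*}L'\to j_{*}M'\to I\to 0$ with both ends in $\mathcal{F}_{\mathcal{X}}$; extension-closure then yields $j_{*}M'\in\mathcal{F}_{\mathcal{X}}$, so $M'\in\mathcal{F}_{\mathcal{Z}}$.

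I expect this last step --- circumventing the failure of right exactness of $j_{*}$ (and the absence of any exactness hypothesis on $i^{!}$) --- to be the main obstacle, and the trick of replacing the cokernel by the image to recover a genuine short exact sequence with ends in $\mathcal{F}_{\mathcal{X}}$ is the crux. Alternatively, since a torsion-free class in a length abelian category is precisely the torsion-free part of a torsion pair, the whole statement can be read off from the gluing and restriction of torsion pairs along a recollement established in \cite{MH}, combined with the identities above that give $i^{!}\mathcal{F}_{\mathcal{X}}=\mathcal{F}_{\mathcal{Y}}$ and $j^{*}\mathcal{F}_{\mathcal{X}}=\mathcal{F}_{\mathcal{Z}}$; this is the quicker route the statement alludes to, and it too does not require $i^{!}$ to be exact.
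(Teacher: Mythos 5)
Your proof is correct, but it follows a genuinely different route from the paper's. The paper does not verify the closure properties by hand: for $(1)\Rightarrow(2)$ it simply invokes the gluing theorem for torsion pairs in recollements of Ma--Huang \cite[Theorem 1(1)]{MH}, and for $(2)\Rightarrow(1)$ it establishes the identities $\mathcal{F}_{\mathcal{Y}}=i^{!}(\mathcal{F}_{\mathcal{X}})$ and $\mathcal{F}_{\mathcal{Z}}=j^{*}(\mathcal{F}_{\mathcal{X}})$ (via $Y\cong i^{!}i_{*}Y$ and $Z\cong j^{*}j_{*}Z$, which is exactly your ``detection'' observation), checks $j_{*}j^{*}(\mathcal{F}_{\mathcal{X}})\subset\mathcal{F}_{\mathcal{X}}$, and then quotes \cite[Theorems 2(1) and 2(2)]{MH} on restricting torsion pairs along a recollement; in other words, the paper's proof is the alternative you only sketch in your last paragraph. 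Your main argument --- direct verification of closure under subobjects and extensions, with the image trick (replacing $j_{*}N'$ by $I=\mathrm{Im}(j_{*}M'\to j_{*}N')$, and likewise $\mathrm{Im}(i^{!}X\to i^{!}X'')$) to repair the failure of right exactness of the merely left exact functors $j_{*}$ and $i^{!}$ --- is sound at every step and buys self-containedness: it makes explicit exactly which formal properties of the recollement are used and why no exactness hypothesis on $i^{!}$ is needed, and it avoids relying on the (implicit in the paper) identification of subcategories closed under subobjects and extensions with torsion-free parts of torsion pairs in length abelian categories. The paper's route buys brevity by outsourcing all closure verifications to \cite{MH}. Both arguments share the same implicit convention that subcategories contain the zero object, needed in the detection step $j^{*}i_{*}Y=0\in\mathcal{F}_{\mathcal{Z}}$; you flag this explicitly, while the paper uses it silently.
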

\begin{proof}
$(1)\Rightarrow (2)$: By \cite[Theorem 1(1)]{MH}, this is immediate.

$(2)\Rightarrow (1)$: We show that $\mathcal {F}_{\mathcal {Y}}=i^!(\mathcal {F}_{\mathcal {X}})$ and $\mathcal {F}_{\mathcal {Z}}=j^*(\mathcal {F}_{\mathcal {X}})$.

By the definition of $\mathcal {F}_{\mathcal {X}}$, it is obvious that $i^!(\mathcal {F}_{\mathcal {X}})\subset\mathcal {F}_{\mathcal {Y}}$. For any $Y\in \mathcal {F}_{\mathcal {Y}}$, $i_*Y\in \mathcal {F}_{\mathcal {X}}$. It follows that $Y\cong i^!i_*Y\in i^!(\mathcal {F}_{\mathcal {X}})$. Thus $\mathcal {F}_{\mathcal {Y}}=i^!(\mathcal {F}_{\mathcal {X}})$. By \cite[Theorem 2(1)]{MH}, $\mathcal {F}_{\mathcal {Y}}$ is torsion-free.

By the definition of $\mathcal {F}_{\mathcal {X}}$, it is obvious that $j^*(\mathcal {F}_{\mathcal {X}})\subset\mathcal {F}_{\mathcal {Z}}$. For any $Z\in \mathcal {F}_{\mathcal {Z}}$, $j_*Z\in \mathcal {F}_{\mathcal {X}}$. It follows that $Z\cong j^*j_*Z\in j^*(\mathcal {F}_{\mathcal {X}})$. Thus $\mathcal {F}_{\mathcal {Z}}=j^*(\mathcal {F}_{\mathcal {X}})$. For any $X\in \mathcal {F}_{\mathcal {X}}$, it is easy to check $j_*j^*X\in \mathcal {F}_{\mathcal {X}}$. So we have $j_*j^*(\mathcal {F}_{\mathcal {X}})\subset \mathcal {F}_{\mathcal {X}}$. By \cite[Theorem 2(2)]{MH}, $\mathcal {F}_{\mathcal {Z}}$ is torsion-free.
\end{proof}

Using the bijection between torsion-free classes and cofinally closed monobricks in Theorem \ref{bijection}, we can give an explicit construction of cofinally closed monobricks in recollements. More precisely, if $i^!$ is exact, the construction of monobricks in Proposition \ref{intermediate}(4) restricts to cofinally closed monobricks.

\begin{theorem}
Let $R(\mathcal {Y}, \mathcal {X}, \mathcal {Z})$ be a recollement of length abelian categories. Assume that $\mathcal {M}_\mathcal {Y}\in \mbrick_{c.c.} \mathcal{Y}$ and $\mathcal {M}_\mathcal {Z}\in \mbrick_{c.c.} \mathcal{Z}$. If $i^!$ is exact, then $\mathcal {M}_\mathcal {X}:=i_*(\mathcal {M}_\mathcal {Y})\sqcup j_*(\mathcal {M}_\mathcal {Z})\in \mbrick_{c.c.} \mathcal{X}$.
\end{theorem}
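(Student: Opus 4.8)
The plan is to reduce the statement to the gluing of torsion-free classes already obtained in Theorem \ref{theorem-torsion-free}, using the bijection between cofinally closed monobricks and torsion-free classes from Theorem \ref{bijection}.

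First I would translate the hypotheses. Since $\mathcal{M}_\mathcal{Y}\in\mbrick_{c.c.}\mathcal{Y}$ and $\mathcal{M}_\mathcal{Z}\in\mbrick_{c.c.}\mathcal{Z}$, the restriction $\opname{torf}\mathcal{A}\longleftrightarrow\mbrick_{c.c.}\mathcal{A}$ of the bijection in Theorem \ref{bijection} tells me that $\mathcal{F}_\mathcal{Y}:=\opname{Filt}\mathcal{M}_\mathcal{Y}$ and $\mathcal{F}_\mathcal{Z}:=\opname{Filt}\mathcal{M}_\mathcal{Z}$ are torsion-free classes satisfying $\opname{sim}\mathcal{F}_\mathcal{Y}=\mathcal{M}_\mathcal{Y}$ and $\opname{sim}\mathcal{F}_\mathcal{Z}=\mathcal{M}_\mathcal{Z}$. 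I would then form $\mathcal{F}_\mathcal{X}:=\{X\in\mathcal{X}\mid j^*X\in\mathcal{F}_\mathcal{Z},\ i^!X\in\mathcal{F}_\mathcal{Y}\}$ and invoke Theorem \ref{theorem-torsion-free} to conclude that $\mathcal{F}_\mathcal{X}$ is a torsion-free class in $\mathcal{X}$ (this step does not even require $i^!$ to be exact).

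The heart of the argument is to identify the simple objects of $\mathcal{F}_\mathcal{X}$ with $\mathcal{M}_\mathcal{X}$. Since every torsion-free class is left Schur, $\mathcal{F}_\mathcal{Y}$ and $\mathcal{F}_\mathcal{Z}$ meet the hypotheses of Theorem \ref{main theorem}, and because $i^!$ is exact I can rerun the computation from the direction $(1)\Rightarrow(2)$ of that proof, which gives $\mathcal{F}_\mathcal{X}=\opname{Filt}\mathcal{M}_\mathcal{X}$ with $\mathcal{M}_\mathcal{X}=i_*(\mathcal{M}_\mathcal{Y})\sqcup j_*(\mathcal{M}_\mathcal{Z})$. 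Exactness of $i^!$ also yields $j_{!*}\cong j_*$ by Proposition \ref{intermediate}(1), so $\mathcal{M}_\mathcal{X}\in\mbrick\mathcal{X}$ by Proposition \ref{intermediate}(4); applying $\opname{sim}$ and using that $\opname{sim}$ and $\opname{Filt}$ are mutually inverse (Theorem \ref{bijection}) then gives $\opname{sim}\mathcal{F}_\mathcal{X}=\mathcal{M}_\mathcal{X}$.

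Finally I would read off the conclusion: $\mathcal{F}_\mathcal{X}$ is a torsion-free class, so under the restricted bijection $\opname{torf}\mathcal{X}\longleftrightarrow\mbrick_{c.c.}\mathcal{X}$ its image $\opname{sim}\mathcal{F}_\mathcal{X}=\mathcal{M}_\mathcal{X}$ is automatically a cofinally closed monobrick, which is exactly the assertion. The main obstacle I anticipate is justifying that the filtration identity $\mathcal{F}_\mathcal{X}=\opname{Filt}\mathcal{M}_\mathcal{X}$ from Theorem \ref{main theorem} may legitimately be reused in this setting; this amounts to checking that torsion-free classes are left Schur (so the hypotheses transfer) and that exactness of $i^!$ is genuinely available, after which the identification of simples, and hence cofinal closedness, follows formally.
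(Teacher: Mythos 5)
Your proposal is correct and follows essentially the same route as the paper's own proof: both translate $\mathcal{M}_\mathcal{Y},\mathcal{M}_\mathcal{Z}$ into the torsion-free classes $\mathcal{F}_\mathcal{Y}=\opname{Filt}\mathcal{M}_\mathcal{Y}$, $\mathcal{F}_\mathcal{Z}=\opname{Filt}\mathcal{M}_\mathcal{Z}$ via Theorem \ref{bijection}, glue them into $\mathcal{F}_\mathcal{X}\in\opname{torf}\mathcal{X}$ by Theorem \ref{theorem-torsion-free}, reuse the $(1)\Rightarrow(2)$ computation of Theorem \ref{main theorem} to identify $\mathcal{F}_\mathcal{X}=\opname{Filt}\mathcal{M}_\mathcal{X}$, and conclude via the restricted bijection $\opname{torf}\mathcal{X}\longleftrightarrow\mbrick_{c.c.}\mathcal{X}$. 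The details you make explicit (torsion-free classes are left Schur, and $j_{!*}\cong j_*$ when $i^!$ is exact so that Proposition \ref{intermediate}(4) applies with $j_*$) are precisely the points the paper leaves implicit.
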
 
\begin{proof}
By Theorem \ref{bijection}, $\mathcal {F}_{\mathcal {Y}}:=\opname{Filt} \mathcal {M}_\mathcal {Y}\in \opname{torf} \mathcal {Y}$ and $\mathcal {F}_{\mathcal {Z}}:=\opname{Filt}\mathcal {M}_\mathcal {Z}\in \opname{torf}\mathcal {Z}$. It follows that $\mathcal {M}_\mathcal {Y}=\opname{sim} \mathcal {F}_{\mathcal {Y}}$ and $\mathcal {M}_\mathcal {Z}=\opname{sim} \mathcal {F}_{\mathcal {Z}}$. By Theorem \ref{theorem-torsion-free}, we have $\mathcal {F}_{\mathcal {X}}:=\{X|j^*X\in \mathcal {F}_{\mathcal {Z}}, i^!X\in \mathcal {F}_{\mathcal {Y}}\}\in \opname{torf}{\mathcal {X}}$. Using the same argument as in the proof of Theorem \ref{main theorem}, we can prove that $\mathcal {F}_{\mathcal {X}}=\opname{Filt} \mathcal {M}_{\mathcal {X}}$. Then $\mathcal {M}_\mathcal {X}\in \mbrick_{c.c.} \mathcal{X}$ by Theorem \ref{bijection}.
\end{proof}

As an application, we have the following result over triangular matrix algebras.

\begin{corollary}
Let $B$ and $C$ be finite-dimensional algebras over a field. Assume that $M$ is a finitely generated $C$-$B$-bimodule and $A$ is the triangular matrix algebra $\left(\begin{array}{cc} B & 0\\ {}_CM_B & C\end{array}\right)$. Let $\mathcal {E}_{B}\subset \mod B$, $\mathcal {E}_{C}\subset \mod C$. Take $\mathcal {E}_{A}$ the full subcategory of $\mod A$ consisting of the modules of the form $(X, Y)_{f}$ where $X\in \mathcal {E}_{B}$ and $Y\in \mathcal {E}_{C}$. Then $\mathcal {E}_{B}$ and $\mathcal {E}_{C}$ are left Schur (resp. wide, torsion-free) if and only if $\mathcal {E}_{A}$ is left Schur (resp. wide, torsion-free).
\end{corollary}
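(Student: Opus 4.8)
The plan is to realize the corollary as the three main theorems applied to the canonical recollement attached to the triangular decomposition of $A$; after the correct set-up there is no further homological work. Concretely, I would (i) produce a recollement $R(\mod C, \mod A, \mod B)$, (ii) identify the quotient functor $j^*$ and the functor $i^!$ with the two component functors $(X,Y)_f\mapsto X$ and $(X,Y)_f\mapsto Y$, (iii) check that $i^!$ is exact, and (iv) observe that the subcategory $\mathcal{E}_A$ of the statement is precisely the subcategory $\{V\in\mod A\mid j^*V\in\mathcal{E}_B,\ i^!V\in\mathcal{E}_C\}$ occurring in Theorems~\ref{main theorem}, \ref{theorem-wide} and \ref{theorem-torsion-free}. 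Granting (i)--(iv), the left Schur equivalence is Theorem~\ref{main theorem}, the wide equivalence is Theorem~\ref{theorem-wide}, and the torsion-free equivalence is Theorem~\ref{theorem-torsion-free}.

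For (i) I would use the idempotent $e=\begin{pmatrix}1&0\\0&0\end{pmatrix}\in A$, for which $eAe\cong B$ and $A/AeA\cong C$, and take the standard recollement
$$\xymatrix@!C=2pc{
\mod C\; \ar@{>->}[rr]|{i_{*}} && \mod A \ar@<-4.0mm>@{->>}[ll]_{i^{*}} \ar@{->>}[rr]|{j^{*}} \ar@{->>}@<4.0mm>[ll]^{i^{!}}&& \mod B \ar@{>->}@<-4.0mm>[ll]_{j_{!}} \ar@{>->}@<4.0mm>[ll]^{j_{*}}
},$$
so that $\mathcal{Y}=\mod C$ and $\mathcal{Z}=\mod B$, with $i_*$ the inflation functor, $i^!=\Hom_A(A/AeA,-)$, and $j^*=e(-)$. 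For (ii), writing an $A$-module as a triple $(X,Y)_f$ with $X\in\mod B$, $Y\in\mod C$ and $f\colon M\otimes_B X\to Y$ a $C$-morphism, I would read off $j^*(X,Y)_f\cong X$ directly and obtain $i^!(X,Y)_f\cong Y$ from the adjunction $\Hom_A(i_*Y',(X,Y)_f)\cong\Hom_C(Y',Y)$, which is transparent because a morphism out of $i_*Y'=(0,Y')_0$ is determined by its $C$-component. This makes (iv) immediate: the two defining conditions $X\in\mathcal{E}_B$ and $Y\in\mathcal{E}_C$ are exactly $j^*V\in\mathcal{E}_B$ and $i^!V\in\mathcal{E}_C$, with $\mathcal{E}_{\mathcal{Z}}=\mathcal{E}_B$ and $\mathcal{E}_{\mathcal{Y}}=\mathcal{E}_C$.

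The only genuine verification is (iii), and this is where the triangular shape does the work: a sequence in $\mod A$ is exact if and only if it is exact on each component, so the component functor $(X,Y)_f\mapsto Y\cong i^!(X,Y)_f$ is exact. Thus $i^!$ is automatically exact for every triangular matrix algebra, which is exactly the hypothesis of Theorems~\ref{main theorem} and \ref{theorem-wide}; the torsion-free case needs even less, since Theorem~\ref{theorem-torsion-free} does not require $i^!$ exact. I expect the only real pitfall to be bookkeeping in (i)--(ii): one must choose the idempotent and the orientation so that $j^*$ and $i^!$ (not $i^*$ and $j_!$) recover $X$ and $Y$, since swapping the roles of $\mathcal{Y}$ and $\mathcal{Z}$ or using the non-exact left adjoints would destroy both the match with the definition of $\mathcal{E}_A$ and the applicability of the theorems. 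No new ingredients beyond the three theorems are needed.
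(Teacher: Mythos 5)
Your proposal is correct and follows essentially the same route as the paper: both realize $\mathcal{E}_A$ as the glued subcategory of the canonical idempotent recollement of $\mod A$ and then invoke Theorems 3.2, 3.3 and 3.4, the only real content being the identification of $j^*$ and $i^!$ with the two component functors and the (automatic) exactness of $i^!$. The sole difference is a convention: the paper orients the recollement as $R(\mod B,\mod A,\mod C)$ with the tensor functors $?\otimes_A Ae_1$ and $?\otimes_A Ae_2$ (so $i^!$ recovers $X$ and $j^*$ recovers $Y$), whereas you use the mirror orientation $R(\mod C,\mod A,\mod B)$ with $j^*=e(-)$ recovering $X$ and $i^!=\Hom_A(A/AeA,-)$ recovering $Y$; under either convention $i^!$ is exact and $\mathcal{E}_A$ coincides with the glued subcategory, so the two arguments are interchangeable.
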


\begin{proof}
Put $e_1=\left(\begin{array}{cc} 1 & 0\\ 0 & 0\end{array}\right)$ and $e_2=\left(\begin{array}{cc} 0 & 0\\ 0 & 1\end{array}\right)$.
It is well know that there is a recollement of $\mod A$ relative to $\mod B$ and $\mod C$ as follows:
$$\xymatrix@!C=2pc{
\mod B\; \ar@{>->}[rr]|{i_{*}} && \mod A \ar@<-4.0mm>@{->>}[ll]_{i^{*}} \ar@{->>}[rr]|{?\otimes _A Ae_2} \ar@{->>}@<4.0mm>[ll]^{?\otimes _A Ae_1}&& \mod C \ar@{>->}@<-4.0mm>[ll]_{j_{!}} \ar@{>->}@<4.0mm>[ll]^{j_{*}}
}$$

By Theorem \ref{main theorem}, \ref{theorem-wide} and \ref{theorem-torsion-free} , we only have to prove that $$\mathcal {E}_{A}=\{T\in \mod A|T\otimes _A Ae_2\in \mathcal {E}_{\mathcal {C}}, T\otimes _A Ae_1\in \mathcal {E}_{\mathcal {B}}\}.$$ Assume that $T=(X, Y)_{f}$. Then $T\otimes _A Ae_2\cong Te_{2}\cong Y$ and $T\otimes _A Ae_1\cong Te_{1}\cong X$. The result is immediate.
\end{proof}
Finally we give an example to illustrate our result. For a finite-dimensional algebra $\Lambda$, we put $\opname{Schur_{L}}\Lambda:=\opname{Schur_{L}}(\mod \Lambda )$.
\begin{example}
Let $B$ be the path algebra of quiver $2\longrightarrow 3$ and $C$ the path algebra of quiver $\cdot 1$. Then the triangular matrix algebra $A$ is the path algebra of quiver $1\longrightarrow 2\longrightarrow 3$.

\begin{table}[htbp]
  \caption{\label{tab:test}}
  \begin{center}
 \begin{tabular}{lcl}
  \toprule
 $ \opname{Schur_{L}}B $\quad\quad\quad & $\opname{Schur_{L}}A$ \quad\quad\quad & $\opname{Schur_{L}}C$ \\
  \midrule
$\begin{tikzpicture}
      [baseline={([yshift=-.5ex]current bounding box.center)}, scale=0.4,  every node/.style={scale=0.5}]
      \node (1) at (0,0)  {};
      \node (21) at (1,1)  {};
      \node (2) at (2,0) {};
      
      \draw[->] (1) -- (21);
      \draw[->] (21) -- (2);
    \end{tikzpicture}$ &
$\begin{tikzpicture}
      [baseline={([yshift=-.5ex]current bounding box.center)}, scale=0.4,  every node/.style={scale=0.5}]
      \node (1) at (0,0) {};
      \node (21) at (1,1)  {};
      \node (2) at (2,0) {};
      \node (321) at (2,2) {};
      \node (32) at (3,1) {};
      \node (3) at (4,0) {};
      \node at (0,-.2) {};
      \node at (0,2.2) {};

      \draw[->] (1) -- (21);
      \draw[->] (21) -- (2);
      \draw[->] (21) -- (321);
      \draw[->] (321) -- (32);
      \draw[->] (2) -- (32);
      \draw[->] (32) -- (3);
    \end{tikzpicture}$ \quad\quad
   & \quad\quad
  $\O $ \\
\midrule
$\begin{tikzpicture}
      [baseline={([yshift=-.5ex]current bounding box.center)}, scale=0.4,  every node/.style={scale=0.5}]
      \node (1) at (0,0)  {};
      \node (21) at (1,1)  {};
      \node (2) at (2,0) {};

      \draw[->] (1) -- (21);
      \draw[->] (21) -- (2);
    \end{tikzpicture}$ &
$\begin{tikzpicture}
      [baseline={([yshift=-.5ex]current bounding box.center)}, scale=0.4,  every node/.style={scale=0.5}]
      \node (1) at (0,0) {};
      \node (21) at (1,1)  {};
      \node (2) at (2,0) {};
      \node (321) at (2,2) {};
      \node (32) at (3,1) {};
      \node (3) at (4,0) [black] {};
      \node at (0,-.2) {};
      \node at (0,2.2) {};

      \draw[->] (1) -- (21);
      \draw[->] (21) -- (2);
      \draw[->] (21) -- (321);
      \draw[->] (321) -- (32);
      \draw[->] (2) -- (32);
      \draw[->] (32) -- (3);
    \end{tikzpicture}$ \quad\quad
   & \quad\quad
  $\bullet  $ \\
\midrule
$\begin{tikzpicture}
      [baseline={([yshift=-.5ex]current bounding box.center)}, scale=0.4,  every node/.style={scale=0.5}]
      \node (1) at (0,0)  {};
      \node (21) at (1,1)  {};
      \node (2) at (2,0)[black] {};

      \draw[->] (1) -- (21);
      \draw[->] (21) -- (2);
    \end{tikzpicture}$ &
$\begin{tikzpicture}
      [baseline={([yshift=-.5ex]current bounding box.center)}, scale=0.4,  every node/.style={scale=0.5}]
      \node (1) at (0,0) {};
      \node (21) at (1,1)  {};
      \node (2) at (2,0)[black] {};
      \node (321) at (2,2) {};
      \node (32) at (3,1) {};
      \node (3) at (4,0) {};
      \node at (0,-.2) {};
      \node at (0,2.2) {};

      \draw[->] (1) -- (21);
      \draw[->] (21) -- (2);
      \draw[->] (21) -- (321);
      \draw[->] (321) -- (32);
      \draw[->] (2) -- (32);
      \draw[->] (32) -- (3);
    \end{tikzpicture}$ \quad\quad
   & \quad\quad
  $\O $ \\
\midrule
$\begin{tikzpicture}
      [baseline={([yshift=-.5ex]current bounding box.center)}, scale=0.4,  every node/.style={scale=0.5}]
      \node (1) at (0,0)  {};
      \node (21) at (1,1)  {};
      \node (2) at (2,0)[black] {};

      \draw[->] (1) -- (21);
      \draw[->] (21) -- (2);
    \end{tikzpicture}$ &
$\begin{tikzpicture}
      [baseline={([yshift=-.5ex]current bounding box.center)}, scale=0.4,  every node/.style={scale=0.5}]
      \node (1) at (0,0) {};
      \node (21) at (1,1)  {};
      \node (2) at (2,0)[black] {};
      \node (321) at (2,2) {};
      \node (32) at (3,1)[white] {};
      \node (3) at (4,0) [black]{};
      \node at (0,-.2) {};
      \node at (0,2.2) {};

      \draw[->] (1) -- (21);
      \draw[->] (21) -- (2);
      \draw[->] (21) -- (321);
      \draw[->] (321) -- (32);
      \draw[->] (2) -- (32);
      \draw[->] (32) -- (3);
    \end{tikzpicture}$ \quad\quad
   & \quad\quad
  $\bullet  $ \\
\midrule
$\begin{tikzpicture}
      [baseline={([yshift=-.5ex]current bounding box.center)}, scale=0.4,  every node/.style={scale=0.5}]
      \node (1) at (0,0)[black]  {};
      \node (21) at (1,1)  {};
      \node (2) at (2,0) {};

      \draw[->] (1) -- (21);
      \draw[->] (21) -- (2);
    \end{tikzpicture}$ &
$\begin{tikzpicture}
      [baseline={([yshift=-.5ex]current bounding box.center)}, scale=0.4,  every node/.style={scale=0.5}]
      \node (1) at (0,0)[black] {};
      \node (21) at (1,1)  {};
      \node (2) at (2,0) {};
      \node (321) at (2,2) {};
      \node (32) at (3,1) {};
      \node (3) at (4,0) {};
      \node at (0,-.2) {};
      \node at (0,2.2) {};

      \draw[->] (1) -- (21);
      \draw[->] (21) -- (2);
      \draw[->] (21) -- (321);
      \draw[->] (321) -- (32);
      \draw[->] (2) -- (32);
      \draw[->] (32) -- (3);
    \end{tikzpicture}$ \quad\quad
   & \quad\quad
  $\O $ \\
\midrule
$\begin{tikzpicture}
      [baseline={([yshift=-.5ex]current bounding box.center)}, scale=0.4,  every node/.style={scale=0.5}]
      \node (1) at (0,0) [black] {};
      \node (21) at (1,1)  {};
      \node (2) at (2,0) {};

      \draw[->] (1) -- (21);
      \draw[->] (21) -- (2);
    \end{tikzpicture}$ &
$\begin{tikzpicture}
      [baseline={([yshift=-.5ex]current bounding box.center)}, scale=0.4,  every node/.style={scale=0.5}]
      \node (1) at (0,0)[black] {};
      \node (21) at (1,1)  {};
      \node (2) at (2,0) {};
      \node (321) at (2,2) {};
      \node (32) at (3,1) {};
      \node (3) at (4,0)[black] {};
      \node at (0,-.2) {};
      \node at (0,2.2) {};

      \draw[->] (1) -- (21);
      \draw[->] (21) -- (2);
      \draw[->] (21) -- (321);
      \draw[->] (321) -- (32);
      \draw[->] (2) -- (32);
      \draw[->] (32) -- (3);
    \end{tikzpicture}$ \quad\quad
   & \quad\quad
  $\bullet  $ \\
\midrule
\color{red}{$\begin{tikzpicture}
      [baseline={([yshift=-.5ex]current bounding box.center)}, scale=0.4,  every node/.style={scale=0.5}]
      \node (1) at (0,0)  {};
      \node (21) at (1,1) [black] {};
      \node (2) at (2,0) {};

      \draw[->] (1) -- (21);
      \draw[->] (21) -- (2);
    \end{tikzpicture}$} &
\color{red}{$\begin{tikzpicture}
      [baseline={([yshift=-.5ex]current bounding box.center)}, scale=0.4,  every node/.style={scale=0.5}]
      \node (1) at (0,0) {};
      \node (21) at (1,1) [black] {};
      \node (2) at (2,0) {};
      \node (321) at (2,2) {};
      \node (32) at (3,1) {};
      \node (3) at (4,0) {};
      \node at (0,-.2) {};
      \node at (0,2.2) {};

      \draw[->] (1) -- (21);
      \draw[->] (21) -- (2);
      \draw[->] (21) -- (321);
      \draw[->] (321) -- (32);
      \draw[->] (2) -- (32);
      \draw[->] (32) -- (3);
    \end{tikzpicture}$} \quad\quad
   & \quad\quad
 $\O $ \\
\midrule
\color{red}{$\begin{tikzpicture}
      [baseline={([yshift=-.5ex]current bounding box.center)}, scale=0.4,  every node/.style={scale=0.5}]
      \node (1) at (0,0)  {};
      \node (21) at (1,1)[black]  {};
      \node (2) at (2,0) {};

      \draw[->] (1) -- (21);
      \draw[->] (21) -- (2);
    \end{tikzpicture}$} &
\color{red}{$\begin{tikzpicture}
      [baseline={([yshift=-.5ex]current bounding box.center)}, scale=0.4,  every node/.style={scale=0.5}]
      \node (1) at (0,0) {};
      \node (21) at (1,1)[black]  {};
      \node (2) at (2,0) {};
      \node (321) at (2,2)[white] {};
      \node (32) at (3,1) {};
      \node (3) at (4,0) [black]{};
      \node at (0,-.2) {};
      \node at (0,2.2) {};

      \draw[->] (1) -- (21);
      \draw[->] (21) -- (2);
      \draw[->] (21) -- (321);
      \draw[->] (321) -- (32);
      \draw[->] (2) -- (32);
      \draw[->] (32) -- (3);
    \end{tikzpicture}$} \quad\quad
   & \quad\quad
  $\bullet  $ \\
\midrule
\color{blue}{$\begin{tikzpicture}
      [baseline={([yshift=-.5ex]current bounding box.center)}, scale=0.4,  every node/.style={scale=0.5}]
      \node (1) at (0,0) [black] {};
      \node (21) at (1,1) [black] {};
      \node (2) at (2,0) {};

      \draw[->] (1) -- (21);
      \draw[->] (21) -- (2);
    \end{tikzpicture}$} &
\color{blue}{$\begin{tikzpicture}
      [baseline={([yshift=-.5ex]current bounding box.center)}, scale=0.4,  every node/.style={scale=0.5}]
      \node (1) at (0,0) [black] {};
      \node (21) at (1,1)[black]  {};
      \node (2) at (2,0) {};
      \node (321) at (2,2) {};
      \node (32) at (3,1) {};
      \node (3) at (4,0) {};
      \node at (0,-.2) {};
      \node at (0,2.2) {};

      \draw[->] (1) -- (21);
      \draw[->] (21) -- (2);
      \draw[->] (21) -- (321);
      \draw[->] (321) -- (32);
      \draw[->] (2) -- (32);
      \draw[->] (32) -- (3);
    \end{tikzpicture}$} \quad\quad
   & \quad\quad
$\O $ \\
\midrule
\color{blue}{$\begin{tikzpicture}
      [baseline={([yshift=-.5ex]current bounding box.center)}, scale=0.4,  every node/.style={scale=0.5}]
      \node (1) at (0,0) [black] {};
      \node (21) at (1,1)[black]  {};
      \node (2) at (2,0) {};

      \draw[->] (1) -- (21);
      \draw[->] (21) -- (2);
    \end{tikzpicture}$} &
\color{blue}{$\begin{tikzpicture}
      [baseline={([yshift=-.5ex]current bounding box.center)}, scale=0.4,  every node/.style={scale=0.5}]
      \node (1) at (0,0)[black] {};
      \node (21) at (1,1) [black] {};
      \node (2) at (2,0) {};
      \node (321) at (2,2)[white] {};
      \node (32) at (3,1) {};
      \node (3) at (4,0)[black] {};
      \node at (0,-.2) {};
      \node at (0,2.2) {};

      \draw[->] (1) -- (21);
      \draw[->] (21) -- (2);
      \draw[->] (21) -- (321);
      \draw[->] (321) -- (32);
      \draw[->] (2) -- (32);
      \draw[->] (32) -- (3);
    \end{tikzpicture}$} \quad\quad
   & \quad\quad
  $\bullet  $ \\
\midrule
$\begin{tikzpicture}
      [baseline={([yshift=-.5ex]current bounding box.center)}, scale=0.4,  every node/.style={scale=0.5}]
      \node (1) at (0,0) [black] {};
      \node (21) at (1,1) [white] {};
      \node (2) at (2,0) [black]{};

      \draw[->] (1) -- (21);
      \draw[->] (21) -- (2);
    \end{tikzpicture}$ &
$\begin{tikzpicture}
      [baseline={([yshift=-.5ex]current bounding box.center)}, scale=0.4,  every node/.style={scale=0.5}]
      \node (1) at (0,0)[black] {};
      \node (21) at (1,1) [white] {};
      \node (2) at (2,0)[black] {};
      \node (321) at (2,2) {};
      \node (32) at (3,1) {};
      \node (3) at (4,0) {};
      \node at (0,-.2) {};
      \node at (0,2.2) {};

      \draw[->] (1) -- (21);
      \draw[->] (21) -- (2);
      \draw[->] (21) -- (321);
      \draw[->] (321) -- (32);
      \draw[->] (2) -- (32);
      \draw[->] (32) -- (3);
    \end{tikzpicture}$ \quad\quad
   & \quad\quad
  $\O $ \\
\midrule
$\begin{tikzpicture}
      [baseline={([yshift=-.5ex]current bounding box.center)}, scale=0.4,  every node/.style={scale=0.5}]
      \node (1) at (0,0) [black] {};
      \node (21) at (1,1) [white] {};
      \node (2) at (2,0)[black] {};

      \draw[->] (1) -- (21);
      \draw[->] (21) -- (2);
    \end{tikzpicture}$ &
$\begin{tikzpicture}
      [baseline={([yshift=-.5ex]current bounding box.center)}, scale=0.4,  every node/.style={scale=0.5}]
      \node (1) at (0,0) [black]{};
      \node (21) at (1,1) [white] {};
      \node (2) at (2,0)[black] {};
      \node (321) at (2,2)[white] {};
      \node (32) at (3,1)[white] {};
      \node (3) at (4,0)[black] {};
      \node at (0,-.2) {};
      \node at (0,2.2) {};

      \draw[->] (1) -- (21);
      \draw[->] (21) -- (2);
      \draw[->] (21) -- (321);
      \draw[->] (321) -- (32);
      \draw[->] (2) -- (32);
      \draw[->] (32) -- (3);
    \end{tikzpicture}$ \quad\quad
   & \quad\quad
  $\bullet  $ \\
  \bottomrule
 \end{tabular}
 \end{center}
\end{table}

The Auslander-Reiten quiver of $\mod B$ is
$$\begin{tikzpicture}[scale=.6]
      [baseline={([yshift=-.5ex]current bounding box.center)}, scale=0.4,  every node/.style={scale=0.5}]
      \node (1) at (0,0)  {$3$};;
      \node (21) at (1,1)  {$\substack{2\\3}$};
      \node (2) at (2,0) {$2$};

      \draw[->] (1) -- (21);
      \draw[->] (21) -- (2);
\end{tikzpicture}$$

The Auslander-Reiten quiver of $\mod A$.
$$
\begin{tikzpicture}[scale=.6]
      \node (1) at (0,0)  {$3$};
      \node (21) at (1,1)  {$\substack{2\\3}$};
      \node (2) at (2,0) {$2$};
      \node (321) at (2,2) {$\substack{1\\2\\3}$};
      \node (32) at (3,1) {$\substack{1\\2}$};
      \node (3) at (4,0) {$1$};
      \draw[->] (1) -- (21);
      \draw[->] (21) -- (2);
      \draw[->] (21) -- (321);
      \draw[->] (321) -- (32);
      \draw[->] (2) -- (32);
      \draw[->] (32) -- (3);
\end{tikzpicture}
$$
In Table 1, we list left Schur subcategories of $\mod A$ constructed from that of $\mod B$ and $\mod C$. We write the left Schur subcategories $\opname{Filt} \mathcal {M}$ in the AR quiver, where the black vertices are the corresponding monobricks $\mathcal {M}$, and the white vertices denote the remaining objects in $\opname{Filt} \mathcal {M}$. The subcategories which are not torsion-free are red in color. The subcategories which are not wide are blue in color. In the middle, we construct 12 left Schur subcategories of $\mod A$ by Theorem \ref{main theorem}

\end{example}

\section*{Acknowledgements}
This work was supported by the NSFC (Grant No. 12201211).

\bibliographystyle{abbrv}

\end{document}